\def\cM{{\cal M}}
\def\cO{{\cal O}}
\def\cP{{\cal P}}
\def\cT{{\cal T}}
\newcommand{\iref}[1]{(\ref{#1})}
\def\lsim{\hbox{\kern -.2em\raisebox{-1ex}{$~\stackrel{\textstyle<}{\sim}~$}}\kern -.2em}
\def\gsim{\hbox{\kern -.2em\raisebox{-1ex}{$~\stackrel{\textstyle>}{\sim}~$}}\kern -.2em}
\def\R{{\rm \hbox{I\kern-.2em\hbox{R}}}}
\def\Z{{\rm {{\rm Z}\kern-.28em{\rm Z}}}}
\def\sm{\setminus}
\def\<{\langle}
\def\>{\rangle}
\newcommand{\be}{\begin{equation}}
\newcommand{\ee}{\end{equation}}
\newcommand\trans{\mathrm T}
\newtheorem{theoremIntro}{Theorem}
\newtheorem{theorem}{Theorem}[section]
\newtheorem{definition}[theorem]{Definition}
\newtheorem{prop}[theorem]{Proposition}
\newtheorem{lemma}[theorem]{Lemma}
\newtheorem{corollary}[theorem]{Corollary}
\newtheorem{remark}[theorem]{Remark}
\def\ve{\varepsilon}
\DeclareMathOperator\Id{Id}
\DeclareMathOperator\length{length}
\DeclareMathOperator\dist{d}
\DeclareMathOperator\distC{D}
\begin{document}
\title{On the Accuracy of Anisotropic Fast Marching} 
\author{Jean-Marie Mirebeau
\footnote{CNRS, University Paris Dauphine, UMR 7534, Laboratory CEREMADE, Paris, France.}
\footnote{Part of this research was conducted when the author was visiting Dr Y. Babenko, at Kennesaw state university, funded by Simons Collaboration Grant $\#210363$.}
}
\maketitle
\date{}
\begin{abstract}
The fast marching algorithm, and its variants, solves numerically the generalized eikonal equation associated to an underlying riemannian metric $\cM$. 
A major challenge for these algorithms is the non-isotropy of the riemannian metric, which magnitude is characterized  by the anisotropy ratio $\kappa(\cM) \in [1,\infty]$. 
Applications of the eikonal equation to image processing \cite{BC10,JBTDPIB08} often involve large anisotropy ratios, which motivated the design of new algorithms. %

A variant of the fast marching algorithm, introduced in \cite{M12}, addresses the problem of large anisotropies using an algebraic tool named lattice basis reduction. 
The numerical complexity of this algorithm is insensitive to anisotropy, under extremely weak assumptions.
We establish in this paper, in the simplified setting of a constant riemannian metric, that the accuracy of this algorithm is also extremely robust to anisotropy : in an average sense, it does not degrade as $\kappa(\cM)$ increases. We also extend this algorithm to higher dimension.
\end{abstract}

%


\section*{Introduction}

The Generalized Eikonal Equation is a Partial Differential Equation (PDE), which characterizes the riemannian distance, associated to a riemannian metric $\cM$, between a domain's boundary and an arbitrary point of this domain. Alternatively this PDE is also the level set formulation of an elementary front propagation model \cite{S99}, where the front speed is dictated locally by the front position and orientation, a dependence encoded in the riemannian metric $\cM$, but independent of global properties of the front, or higher order properties such as its curvature. This restrictive setting allows to compute numerically the front evolution using the 
fast marching algorithm or its variants, which are derivatives of Dijkstra's shortest path algorithm, instead of more costly level set methods. 
Solutions to the Generalized Eikonal equation have numerous applications \cite{S99}, including medical image  processing \cite{BC10,JBTDPIB08} which motivates this work. 
In this context, pronounced anisotropies are not uncommon, which challenges currently available algorithms. 

The original fast marching algorithm \cite{T95} is limited to isotropic metrics. Several variants were later developed, which can handle limited \cite{JBTDPIB08} or arbitrary \cite{SV03,M12} anisotropies. Anisotropy magnitude is characterized by the anisotropy ratio $\kappa(\cM)$, see \iref{defKappa}, which reflects the local distortion of distances. The method \cite{M12} was developed for applications based on a cartesian grid, such as image processing, where robustness to large anisotropies is crucial, both in terms of computational complexity and numerical accuracy. This algorithm was originally limited to two and three dimensional domains, but we extend it to four dimensions in the last section of this paper.
The complexity $\cO(N \ln N+ N \ln \kappa(\cM))$ of this algorithm, where $N$ denotes the cardinality of the discrete computational domain, is similar to that of the original isotropic fast marching algorithm $\cO(N \ln N)$, under the weak assumption that the number of discretization points $N$ exceeds the anisotropy ratio $\kappa(\cM)$ (in typical applications $\kappa(\cM) \lsim 100$ and $N \gsim 10^4$). 

We study in this paper the accuracy of this algorithm, motivated by numerical experiments, 
which showed significant improvements over an alternative method \cite{BR06} for solving the generalized eikonal equation. More precisely, the highly anisotropic benchmark ($\kappa(\cM)=100$) presented on Figure 3 in \cite{M12} shows an error reduction by a factor 7 on a $1200\times 1200$ grid, while the computation time is reduced by a factor 200. 
The error analysis in the case of an arbitrary continuous riemannian metric $\cM$ on a general domain was not succeeded, and we therefore restrict our attention to case of a constant metric $\cM$ on the domain $\R^d\sm \{0\}$. While this simplified setting is purely academic, the author believes that it gives a valuable insight on the local behavior of the general case. Note that the fast marching algorithm, and most of its variants, are first order discretizations of the eikonal equation PDE, but that we are interested the precise dependence of the numerical error with respect to the anisotropy ratio $\kappa(\cM)$. In the worst case scenario, our error bound grows like a power of $\kappa(\cM)$. We show however in Theorems \ref{th:estim} and \ref{th:mu} that, in an average sense over all space directions, the numerical error in independent of the anisotropy ratio $\kappa(\cM)$.\\

In order to state our results, we need to introduce some notations. We consider a fixed integer $d \geq 1$, and we denote by $S_d^+$ the collection of $d\times d$ symmetric positive definite matrices. For each $M\in S_d^+$ we introduce the scalar product $\<\cdot, \cdot\>_M$ and the norm $\|\cdot\|_M$ defined for all $u,v \in \R^d$ by 
\be
\label{defScalNorm}
\<u,v\>_M := u^\trans M v, \qquad \|u\|_M := \sqrt{\<u,u\>_M}.
\ee
Consider an open domain $\Omega \subsetneq \R^d$, and a riemannian metric $\cM \in C^0( \overline \Omega , S_d^+)$. 
The generalized eikonal equation is the following PDE :
\be
\label{eikonal}
\left\{
\begin{array}{rl}
\|\nabla \distC(z)\|_{\cM(z)^{-1}} = 1 & \text{for all } z \in \Omega,\\
\distC(z) = 0 & \text{for all } z\in \partial \Omega.
\end{array}
\right.
\ee
We chose null boundary conditions for simplicity, and we refer the reader interested in more general boundary conditions to the discussion in \cite{BR06}. 
The PDE \iref{eikonal} characterizes the distance function $\distC \in C^0(\overline \Omega, \R_+)$ to the boundary of $\Omega$ : the unique viscosity solution \cite{L82} is 
\begin{eqnarray*}
\distC (z) &=& \inf \{ \length(\gamma); \, \gamma\in C^1([0,1], \overline \Omega), \, \gamma(0)=z, \, \gamma(1) \in \partial \Omega\},\\
\length(\gamma) &:=& \int_0^1 \| \gamma'(t)\|_{\cM(\gamma(t))} dt.
\end{eqnarray*}

The anisotropy ratio $\kappa(\cM)\in [1, \infty]$ is the supremum value of $\|u\|_{\cM(z)} / \|v\|_{\cM(z)}$, where $z\in \overline \Omega$ and $u,v\in \R^d$ are vectors of unit euclidean norm. In other words
\be
\label{defKappa}
\kappa(\cM) := \sup_{z\in \overline\Omega} \kappa(\cM(z)), \quad \text{ where for } M \in S_d^+, \ \kappa(M) := \sqrt{\|M\| \|M^{-1}\|}. 
\ee
The discretization of the PDE \iref{eikonal} takes the form of a fixed point problem \cite{T95,BR06,M12}
\be
\label{eikonal_disc}
\left\{
\begin{array}{ll}
\dist(z) = \Lambda(\dist, z) & \text{for all } z \in \Omega_*,\\
\dist(z) = 0 & \text{for all } z\in \partial \Omega_*,
\end{array}
\right.
\ee
where $\Omega_*$ and $\partial \Omega_*$ denote discrete sets devoted to the sampling of the continuous domain $\Omega$ and its boundary $\partial \Omega$ respectively, and $\dist : \Omega_* \cup \partial \Omega_* \to \R_+$ denotes a discrete map. The Hopf-Lax update operator $\Lambda(\dist, z)$ can take several forms, but depends only on the value of $\dist(x)$ for a finite number of points of $x\in \Omega_* \cup \partial \Omega_*$, referred to as the neighbors of $z$. Under suitable assumptions, the fixed point problem \iref{eikonal_disc} can be solved by iterative methods \cite{BR06}. If in addition the Hopf-Lax operator $\Lambda$ satisfies a \emph{causality property} (see Lemma \ref{lem:upwind}), then the fast marching algorithm \cite{T95} can be applied to decouple \iref{eikonal_disc} and solve it in a ``single pass'', using a clever ordering of the set $\Omega_*$.\\

In order to make a sharp error analysis of this numerical scheme, we restrict as announced our attention to a very specific (and academic) setting. We consider the domain $\Omega = \R^d \sm \{0\}$, thus $\partial \Omega = \{0\}$, which will be discretized on the cartesian grid : $\Omega_* := \Z^d\sm \{0\}$, $\partial \Omega_* := \{0\}$. 
We fix a matrix $M\in S_d^+$ and we consider the constant riemannian metric defined by $\cM(z) = M$ for all $z\in \R^d$. We denote by $\distC_M$ the solution of \iref{eikonal}, which has the explicit expression 
\be
\label{exact_sol}
\distC_M(z) = \|z\|_M, \quad z\in \R^d.
\ee

We use the variant introduced in \cite{M12} of the Hopf-Lax update operator $\Lambda$, which is based on a $M$-reduced mesh $\cT$ (see Definition 1.1 in \cite{M12}), fixed in the rest of this paper. In other words $\cT$ is a finite conforming mesh which satisfies the following properties

(I)\phantom{II} The union of the simplices $T \in \cT$ is a neighborhood of the origin.

(II)\phantom{I} The vertices of each simplex $T\in \cT$ lie on the lattice $\Z^d$, and $T$ has volume $1/d!$.

(III) For each $T\in \cT$, one of the vertices of $T$ is the origin $0$, and the others denoted by $v_1, \cdots, v_d$ satisfy for all $1\leq i,j \leq d$ the acuteness condition
\be
\label{acuteness}
\<v_i, v_j\>_M \geq 0.
\ee

\begin{figure}
\begin{center}
\includegraphics[width=5cm]{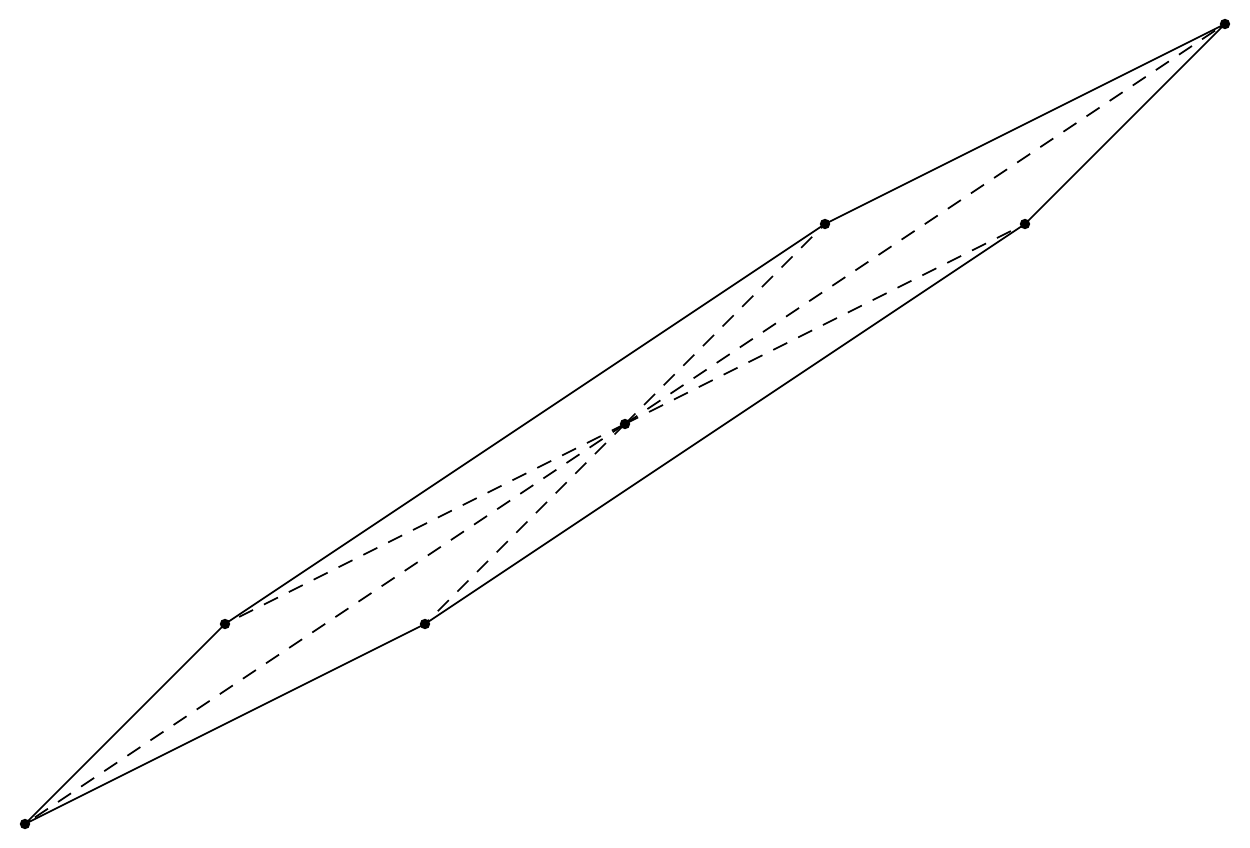}
\hspace{-0.2cm}
\includegraphics[width=5cm]{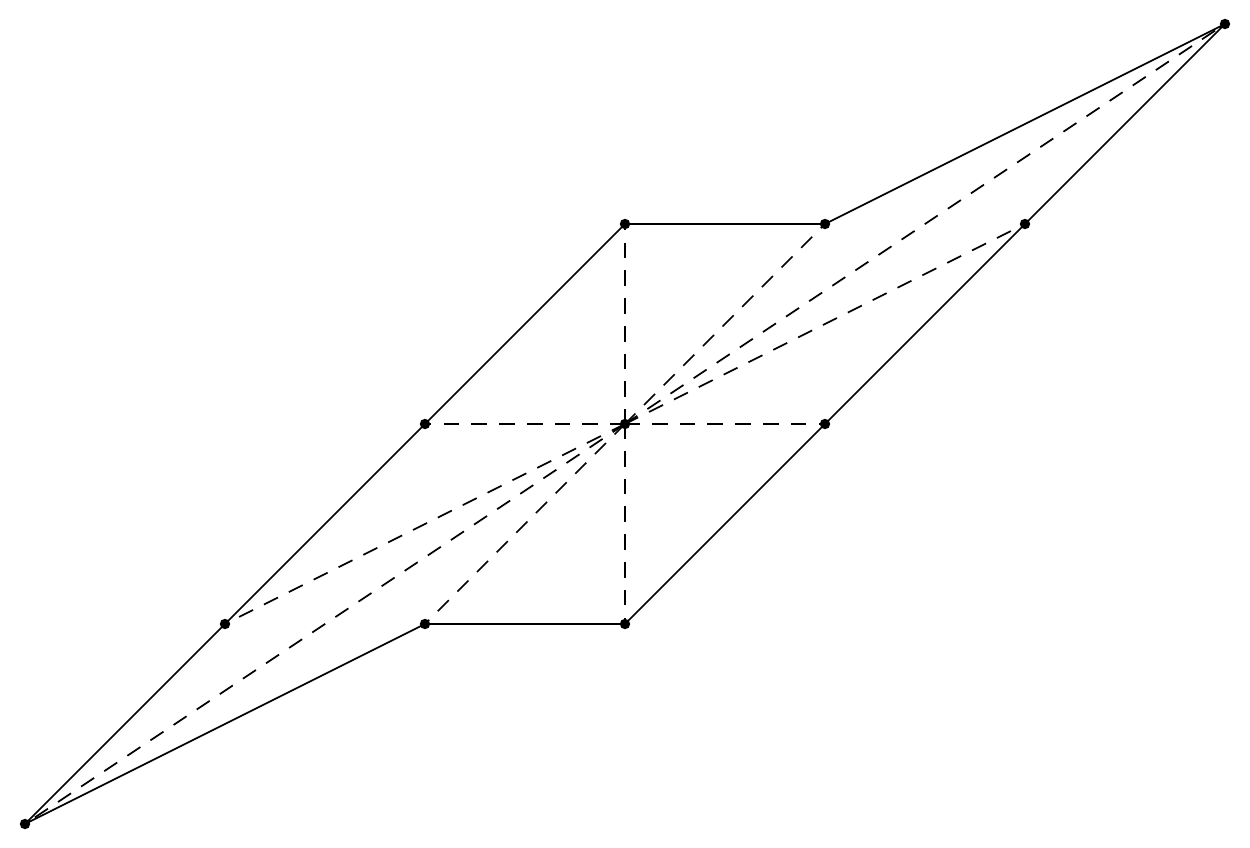}
\hspace{0.2cm}
{\raise -0.1cm \hbox{
\includegraphics[width=4cm]{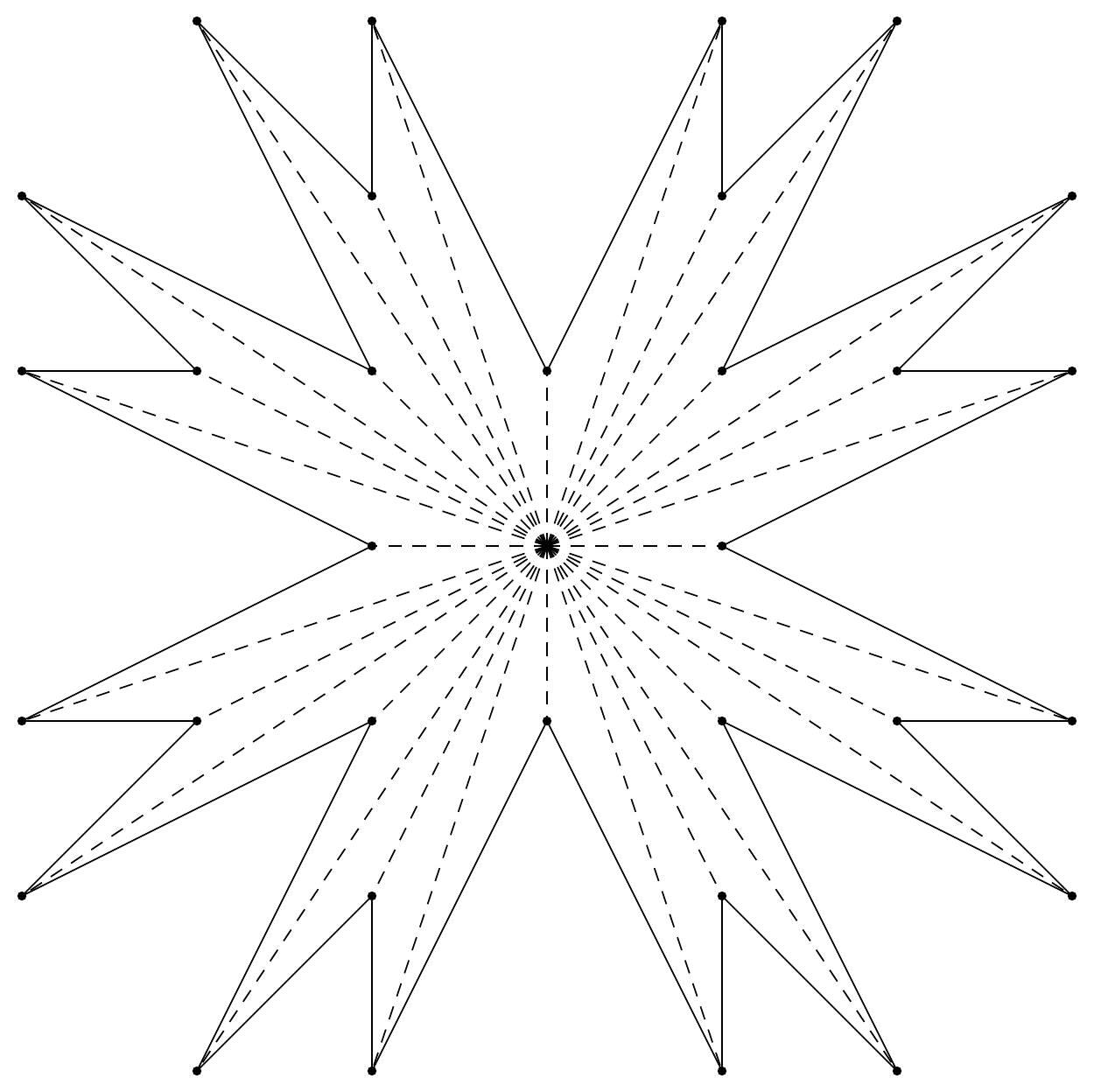}
}}
\end{center}

\caption{
\label{fig:mesh}
Three $M$-reduced meshes, produced using different strategies, for a matrix $M\in S_2^+$ of anisotropy ratio $\kappa(M)=10$ and eigenvector $(1,0.6)$. 
}
\end{figure}

For each map $\dist : \Z^d \to \R_+ \cup \{\infty\}$ and each $z\in \Z^d$, we define
\be
\label{defLambda}
\Lambda(\dist,z):= \min_{k,(\alpha_i), (v_i)} \left\{ \left\| \sum_{1 \leq i \leq k} \alpha_i v_i \right\|_M + \sum_{1 \leq i \leq k} \alpha_i \, \dist(z+v_i)\right\}, 
\ee
where the minimum is taken among all $1 \leq k \leq d$, all $\alpha_1, \cdots, \alpha_k\in \R_+$ such that $\alpha_i+ \cdots+ \alpha_k = 1$, and all non-zero vertices $v_1, \cdots,v_k$  of a common simplex $T \in \cT$. By convention, $0\times  \infty=0$. For information, in the case of a \emph{non-constant} riemannian metric $\cM$, a distinct $\cM(z)$-reduced mesh $\cT_z$ is attached to each discrete point $z$ and used to define the operator $\Lambda(\cdot,z)$, see \cite{M12}.\\

Our first result bounds the numerical error $\dist_M(z)-\distC_M(z)$, $z\in \Z^d$, in terms of the geometry of the local mesh $\cT$. We introduce the bounding radius $r_M(\cT) \in \R_+$, the radii $r_M(T)\in \R_+$ and the angles $\theta_M(T)\in [0,\pi]$, for $T \in \cT$, defined by 
\be
\label{RMT}
r_M(\cT) := \max_{T \in \cT} r_M(T), \quad r_M(T) := \max_{z\in T} \|z\|_M, \quad \cos \theta_M(T) := \min_{u,v\in T\sm\{0\}} \frac{\<u,v\>_M}{\|u\|_M \|v\|_M}.
\ee
Note that the extrema defining $r_M(T)$ and $\theta_M(T)$ are attained for vertices of $T$. We denote by $\R_+ T$ the cone spanned by a simplex $T\in \cT$ : 
$$
\R_+ T := \{r z;\, r\in \R_+, \, z\in T\}.
$$

\begin{theoremIntro}
\label{th:estim}
If the Hopf-Lax update operator $\Lambda$ is defined by \iref{defLambda}, then the fixed point problem
\be
\label{eikonal_disc_2}
\left\{
\begin{array}{lc}
\dist(z) = \Lambda(\dist,z), & z\in \Z^d \sm \{0\},\\
\dist(0) = 0
\end{array}
\right.
\ee
has a unique solution, which can be obtained ``in one pass'' using the fast marching algorithm. 
Denoting by $\dist_M : \Z^d \to \R_+$ this solution, one has for all $z\in \Z^d$
\be
\label{approx}
0 \leq \dist_M(z)-\distC_M(z) \leq d \, r_M(\cT) \left(1+ \ln^+ \left(\frac{\distC_M(z)}{r_M(\cT)}\right)\right), 
\ee
where $\ln^+(r) := \max\{\ln r,0\}$.
More precisely, if $-z \in \R_+ T$ for some $T \in \cT$, then 
\be
\label{approxT}
0 \leq \dist_M(z)-\distC_M(z) \leq d\, (\sin\theta_M(T))^2 \, r_M(T) \left(1+\ln^+\left(\frac{\distC_M(z)}{r_M(T)}\right)\right).
\ee
\end{theoremIntro}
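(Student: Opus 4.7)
The proof decomposes into well-posedness (easy), a lower bound (easy via convexity), and the upper bound (the substantial part, obtained by a consistency/induction argument). Well-posedness and the single-pass fast marching resolution of \iref{eikonal_disc_2} follow from the causality property of $\Lambda$ under the acuteness condition \iref{acuteness} (cf.\ Lemma~\ref{lem:upwind}), combined with the monotonicity of $\Lambda$ in its first argument. For the lower bound $\|z\|_M \leq \dist_M(z)$, one observes that $\|\cdot\|_M$ is a discrete sub-solution: for any admissible trial $(\alpha_i,v_i)$ in \iref{defLambda}, setting $w:=\sum_i\alpha_i v_i$, Jensen's inequality for the convex function $\|\cdot\|_M$ together with the triangle inequality gives
\[
\|w\|_M + \sum_i \alpha_i \|z + v_i\|_M \,\geq\, \|w\|_M + \|z + w\|_M \,\geq\, \|z\|_M,
\]
and a comparison principle concludes.

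For the upper bound, I exhibit a sharp admissible trial in \iref{defLambda} and close the resulting recursion by induction on the fast marching order. Given $z\in\Z^d\sm\{0\}$, I choose $T\in\cT$ with $-z\in\R_+T$ (possible since $\bigcup_T\R_+T = \R^d$), denote by $v_1,\dots,v_d$ the non-zero vertices of $T$, and write $-z = s\sum_i\alpha_i v_i$ with $s>0$, $\alpha_i\geq 0$, $\sum_i\alpha_i = 1$, so that $w := \sum_i\alpha_iv_i = -z/s$ is aligned with $-z$ and satisfies $\<z,w\>_M = -\|z\|_M \|w\|_M$. Setting $R:=\|z\|_M$, $r:=r_M(T)$, $\theta:=\theta_M(T)$, and letting $\phi_i \leq \theta$ denote the $M$-angle between $-z$ and $v_i$ (the inequality holding since $-z$ lies in the $v_j$-cone, which is contained in the $M$-angular cone of opening $\theta$ around $v_i$), the identity
\[
\|z+v_i\|_M = \sqrt{(R-\|v_i\|_M\cos\phi_i)^2 + \|v_i\|_M^2\sin^2\phi_i}
\]
together with $\sqrt{a^2+b^2}\leq a+b^2/(2a)$ and $\sum_i \alpha_i\|v_i\|_M\cos\phi_i = \|w\|_M$ gives, for $R \geq 2r$,
\[
\|w\|_M + \sum_i \alpha_i \|z + v_i\|_M \,\leq\, R + \frac{r^2\sin^2\theta}{2(R-r)}.
\]
The $\sin^2\theta$ factor is the source of the refinement \iref{approxT}; the cruder bound replacing $\sin^2\theta$ by $1$ suffices for \iref{approx} and follows directly from $\sqrt{1+x}\leq 1+x/2$.

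Setting $E(z):=\dist_M(z)-\|z\|_M$, the above yields
\[
E(z) \,\leq\, \frac{r^2\sin^2\theta}{2(R-r)} + \sum_i \alpha_i E(z+v_i).
\]
I close the induction with the Ansatz $E(z) \leq \phi(\|z\|_M)$ where $\phi(R) := c\,r\,(1+\ln^+(R/r))$, for a constant $c$ of order $d$ for \iref{approx} and of order $d\sin^2\theta$ for \iref{approxT}. By concavity of $\phi$, Jensen gives $\sum_i\alpha_i\phi(\|z+v_i\|_M) \leq \phi(R-\|w\|_M + \text{l.o.t.})$; the concavity estimate $\phi(R)-\phi(R-\|w\|_M) \geq cr\|w\|_M/R$ then reduces the inductive step to the geometric lower bound $\|w\|_M \geq r/(2c)$, together with the absorption of lower-order terms. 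Base cases $R \leq 2r$ are absorbed into the constant $cr$ of $\phi$ via a direct one-step argument.

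The main technical obstacle is the geometric lower bound on $\|w\|_M$. The $M$-acuteness \iref{acuteness} yields $\<w,v_i\>_M \geq \alpha_i\|v_i\|_M^2$, hence $\|w\|_M \geq \max_i \alpha_i\|v_i\|_M$; combined with the pigeon-hole inequality $\max_i\alpha_i \geq 1/d$, one obtains $\|w\|_M \geq r_M(T)/d$ in the generic case. The delicate point is that the index realizing $\max_i\alpha_i$ need not coincide with the one realizing $r_M(T) = \max_i \|v_i\|_M$; closing this gap — likely by exploiting the freedom to choose the subset $k$ in \iref{defLambda} (using a sub-face of $T$ adapted to $-z$), and possibly among several $T$'s with $-z\in\R_+T$ — constitutes the crux of the geometric part of the proof, and also determines the precise constant in the refined estimate \iref{approxT}.
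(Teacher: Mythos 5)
Your treatment of well-posedness and of the lower bound matches the paper's (sub-solution property of $\|\cdot\|_M$ via convexity, plus a comparison principle), but the upper bound --- the substantial part --- is not closed, and the gap you flag at the end is a genuine one, not a technicality. The lower bound $\|w\|_M\gtrsim r_M(T)/d$ with $r_M(T)=\max_i\|v_i\|_M$ is simply false when $-z$ is nearly aligned with a short vertex of $T$: then $w\approx v_1$ and $\|w\|_M\approx\|v_1\|_M$ may be arbitrarily smaller than $r_M(T)$, and neither the pigeonhole $\max_i\alpha_i\geq 1/d$ nor passing to sub-faces repairs this, since the target constant still involves $r_M(T)$ of the full simplex. (The step could be rescued by keeping the sharper one-step error $\tfrac{(\sin\theta)^2}{2(R-r)}\sum_i\alpha_i\|v_i\|_M^2$ instead of $\tfrac{(\sin\theta)^2 r^2}{2(R-r)}$ and pairing it with $\|w\|_M^2\geq\sum_i\alpha_i^2\|v_i\|_M^2\geq\tfrac1d\bigl(\sum_i\alpha_i\|v_i\|_M\bigr)^2$, which follows from acuteness and Cauchy--Schwarz; as written your argument does not do this.) A second, unacknowledged issue is the well-foundedness of your induction: for the refined estimate \iref{approxT} the inductive hypothesis at $z+v_i$ must be the $T$-specific bound, hence requires $-(z+v_i)\in\R_+ T$ and $\|z+v_i\|_M<\|z\|_M$; both hold only because the coefficients in $-z=\sum_i\beta_i v_i$ are nonnegative \emph{integers}, which uses property (II) of the mesh ($(v_1,\dots,v_d)$ is a basis of $\Z^d$ since $|T|=1/d!$, Lemma \ref{lem:decomposition} of the paper) --- a fact you never invoke.

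The paper avoids both difficulties by abandoning the radially symmetric Ansatz $E(z)\leq\phi(\|z\|_M)$ in favour of an explicit super-solution adapted to the integer decomposition: $\dist_+(z)=\|z\|_M+(\sin\theta_M(T))^2\sum_i s(\beta_i)\|v_i\|_M$ with harmonic sums $s(\beta)=\sum_{k\leq\beta}1/k$, supported on $-\R_+ T$ and set to $+\infty$ elsewhere. The point of the harmonic sum is that the decrement $(s(\beta_i)-s(\beta_i-1))\|v_i\|_M=\|v_i\|_M/\beta_i\geq\|v_i\|_M^2/\|z\|_M$ (using $\beta_i\|v_i\|_M\leq\|z\|_M$, a consequence of acuteness) exactly absorbs the second-order Taylor error of $\|z+v_i\|_M$ given by Lemma \ref{lem:Taylor}, so the super-solution inequality is verified coordinate-by-coordinate and no lower bound on $\|w\|_M$ is ever needed; the factor $d$ in \iref{approx}--\iref{approxT} appears merely as the number of terms in $\sum_i s(\beta_i)\|v_i\|_M$, each bounded by $r_M(T)(1+\ln^+(\|z\|_M/r_M(T)))$. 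The conclusion then follows from the comparison principle (Proposition \ref{prop:comp}), whose proof by induction along the fast-marching ordering of the super-solution, via the causality Lemma \ref{lem:upwind}, is where the paper localizes the well-foundedness argument you would otherwise need inside your own induction.
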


Inequality \iref{approx} states that the discrete output $\dist_M$ of the fast marching algorithm overestimates the exact solution $\distC_M$ by a logarithmic factor, which is at most proportional to bounding radius $r_M(\cT)$. This should not be a surprise, since the bounding radius $r_M(\cT)$ reflects the discretization step as seen by the matrix $M$, and since this algorithm relies on a first order discretization of the eikonal equation.
The slightly sharper estimate \iref{approxT} takes into account the angle width of the simplex $T$. 

Our second main result is an average estimate of the radius $r_M(\cT)$, when the $M$-reduced mesh $\cT$ is constructed as described in \cite{M12}, in dimension $d \leq 3$ and as described in Proposition \ref{prop4} in dimension $4$. A key feature of this construction is that it guarantees a uniform bound on the mesh cardinality ($\#(\cT) \leq 6,\, 24$ or $768$ if $d=2, \, 3$ or $4$ respectively), and therefore a small complexity of the resulting numerical scheme, independently of the matrix $M$ and thus of its anisotropy ratio $\kappa(M)$. 

We introduce the successive Minkowski minima \cite{M12} of a matrix $M\in S_d^+$, defined for $1 \leq i \leq d$ by 
\be
\label{defMinkowski}
\lambda_i(M) := \min \{\|u_i\|_M; \, (u_1, \cdots, u_i)\in \Z^d \text{ is free, and } \|u_1\|_M \leq \cdots \leq \|u_i\|_M\}.
\ee
Note that $\lambda_1(M) \leq \cdots \leq \lambda_d(M)$ by construction. Choosing $(u_1, \cdots, u_d)$ as the canonical basis, we obtain that $\lambda_d(M) \leq \|M\|^\frac 1 2$, an upper bound which is attained in the case of a diagonal matrix. We will use Minkowski's second theorem on successive minima (\cite{M1896}, p 199), a classical result of the geometry of numbers which states that for any $M\in S_d^+$
\be
\label{Minkowski}
 \frac{2^d}{d!\omega_d} \sqrt{\det M} \leq \lambda_1(M) \cdots \lambda_d(M) \leq \frac{2^d}{\omega_d} \sqrt{\det M}, 
 \ee
where $\omega_d$ denotes the volume of the $d$-dimensional euclidean unit ball. 

It follows from Corollary 1.8 in \cite{M12} that for any $M\in S_d^+$, $1 \leq d \leq 4$, and any $M$-reduced mesh,  one has 
\be
\label{lambdaRM}
\lambda_d(M) \leq r_M(\cT).
\ee
Combining this inequality with Minkowski's second theorem on successive minima \iref{Minkowski}, one obtains the lower bound $r_d(\cT) \geq \lambda_d(M) \geq c_d (\det M)^\frac 1 {2d}$, where $c_d = 2 (d!\omega_d)^{-\frac 1 d}$ and $\omega_d$ denotes the volume of the $d$-dimensional unit ball.
The next theorem shows that this lower estimate for $r_M(\cT)$ is also an upper estimate, in an average sense, when the mesh $\cT$ is constructed with our methods.

\begin{theoremIntro}
\label{th:mu}
Let $M \in S_d^+$. If $1\leq d \leq 4$ and $\cT$ is a $M$-reduced mesh constructed as described in Proposition 1.9 or  Proposition 1.10 of \cite{M12}, or Proposition \ref{prop4} of this paper, then 
\be
\label{RLambda}
r_M(\cT) \leq K_d \lambda_d(M). 
\ee
with $K_2 = 2$, $K_3=3$, $K_4=5$.
Furthermore for any $d\geq 1$ there exists a constant $C_d$ independent of $M$ and such that 
\be
\label{avgMu}
\int_{\cO_d} \lambda_d(R^\trans M R) \,dR \leq C_d(\det M)^\frac 1 {2d}. 
\ee
We denoted by $\cO_d$ the compact group of $d \times d$ orthogonal matrices, equipped with the canonical Haar probability measure. 
\end{theoremIntro}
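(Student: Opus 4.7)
The theorem combines a deterministic bound \eqref{RLambda} on the mesh radius and an integral estimate \eqref{avgMu} over rotations. The two parts are essentially independent, and I would treat them separately.

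For \eqref{RLambda}, my approach is a case analysis in dimensions $d\in\{2,3,4\}$. In each case, the mesh $\cT$ produced by Proposition 1.9 or 1.10 of \cite{M12}, or Proposition \ref{prop4} here, is built from a $M$-reduced lattice basis or superbase $(u_1,\dots,u_d)$ of $\Z^d$ whose vectors satisfy acuteness/obtuseness sign conditions with respect to $\<\cdot,\cdot\>_M$. A classical theorem of the geometry of numbers, valid precisely up to $d=4$, ensures the basis can be chosen to achieve the successive minima, i.e.\ $\|u_i\|_M\leq\lambda_i(M)\leq\lambda_d(M)$. Each vertex of each simplex $T\in\cT$ is an explicit small-integer combination of the $u_i$'s (typically $\pm u_i$, $\pm u_i\pm u_j$, or $\pm u_i\pm u_j\pm u_k$); expanding the square of its $M$-norm via the bilinear form and using the sign conditions to bound the cross terms yields $r_M(T)\leq K_d\lambda_d(M)$ for each $T\in\cT$, with the claimed constants $K_2=2,K_3=3,K_4=5$ emerging from the enumeration of the finitely many vertex types.

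For \eqref{avgMu}, I first reduce to the case $M=D=\mathrm{diag}(\mu_1,\dots,\mu_d)$ diagonal, using the $R\mapsto RU$ invariance of the Haar measure on $\cO_d$ where $U^\trans MU=D$. Next, Minkowski's second theorem \eqref{Minkowski} together with the monotonicity $\lambda_1\leq\cdots\leq\lambda_d$ gives the pointwise bound
\[
\lambda_d(R^\trans DR)\;\leq\;\frac{\lambda_1(R^\trans DR)\cdots\lambda_d(R^\trans DR)}{\lambda_1(R^\trans DR)^{d-1}}\;\leq\;\frac{2^d}{\omega_d}\,\frac{\sqrt{\det D}}{\lambda_1(R^\trans DR)^{d-1}},
\]
so the problem reduces to showing $\int_{\cO_d}\lambda_1(R^\trans DR)^{-(d-1)}\,dR\leq C'_d(\det D)^{-(d-1)/(2d)}$. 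By the layer-cake formula this is in turn equivalent to the tail bound
\[
|\{R\in\cO_d:\lambda_1(R^\trans DR)<s\}|\;\leq\;C''_d\,\frac{s^d}{\sqrt{\det D}}\qquad\text{for all }s>0,
\]
a probabilistic analogue of Minkowski's first theorem. To establish it I write $\lambda_1(R^\trans DR)=\min_{v\in\Z^d\setminus\{0\}}\|D^{1/2}Rv\|$ and apply a union bound over $v$; the trivial lower bound $\|D^{1/2}Rv\|\geq\sqrt{\mu_1}\|v\|$ restricts the sum to $\|v\|\leq s/\sqrt{\mu_1}$. For each such $v$ the direction $Rv/\|v\|$ is uniformly distributed on $S^{d-1}$, so $|\{R:Rv\in E_s\}|$ equals the normalized spherical area of $E_s\cap S_{\|v\|}$, where $E_s=\{z:\|D^{1/2}z\|<s\}$. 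A shell-by-shell summation then recovers $\mathrm{vol}(E_s)/\mathrm{covol}(\Z^d)=\omega_d s^d/\sqrt{\det D}$, up to a dimensional constant.

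The main obstacle will be this final tail estimate. When $\kappa(D)=\sqrt{\mu_d/\mu_1}$ is large the ellipsoid $E_s$ becomes extremely elongated, and one must check that the resulting sphere--ellipsoid intersection measures, summed over lattice vectors $v$ inside thin ellipsoidal shells, remain uniformly controlled by $s^d/\sqrt{\det D}$ independently of $\kappa$. The explicit surface-area computation on $S^{d-1}$ and the corresponding lattice-point count form the technical heart of the argument.
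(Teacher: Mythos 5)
Your proposal follows essentially the same route as the paper for both parts: explicit small-integer vertex combinations of a Minkowski-reduced basis plus the triangle inequality for \iref{RLambda}, and for \iref{avgMu} the chain Minkowski's second theorem $\to$ layer-cake formula $\to$ union bound over lattice points $v$ with $Rv/|v|$ uniform on the sphere $\to$ shell-by-shell lattice count. The one step you flag as the ``technical heart'' and leave open --- the spherical measure of $\{x\in S^{d-1}:\ \|x\|_D\leq s\}$ --- is closed in the paper (Lemma \ref{lem:u}) by the elementary observation that $\{|x|\leq 1,\ \|x\|_D\leq s\}$ is contained in the box $[-1,1]\times\prod_{i\geq 2}[-s/\nu_i,s/\nu_i]$, which gives the bound $C(d)\,s^{d-1}\nu_1$ uniformly in the anisotropy, so the elongation of the ellipsoid causes no actual difficulty.
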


\begin{figure}
\begin{center}
\includegraphics[width=4.5cm]{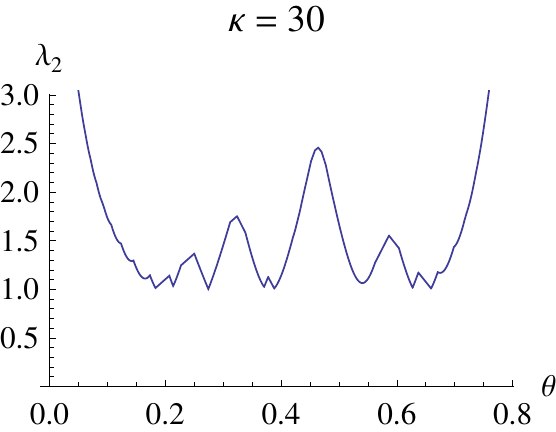}
\hspace{0.5cm}
\includegraphics[width=4.5cm]{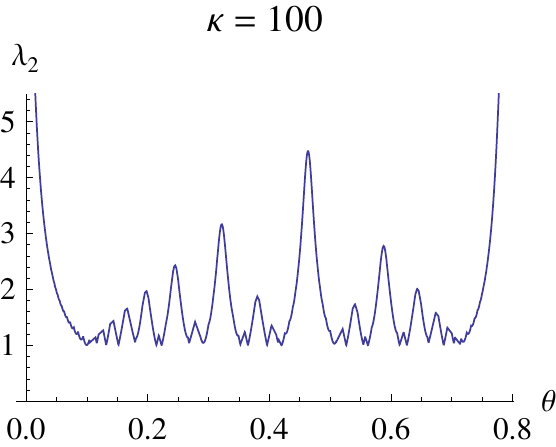}
\hspace{0.5cm}
\includegraphics[width=4.5cm]{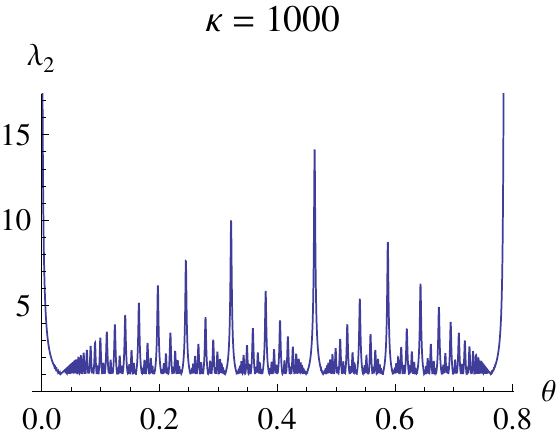}
\end{center}

\caption{
\label{fig:minkowski}
Graph of $\theta \mapsto \lambda_2(R_\theta^\trans D R_\theta)$, where $D$ is a diagonal matrix of entries $(\kappa,1/\kappa)$, and $R_\theta\in \cO_2$ a rotation of angle $\theta\in [0, \pi/4]$.
}
\end{figure}

Let us immediately stress that the scaling factor $(\det M)^\frac 1 {2d}$ does not depend on the anisotropy of $M$ : it is the same for the identity matrix and for a $2 \times 2$ matrix of eigenvalues $\lambda$, $\lambda^{-1}$. This factor only reflects the homogeneous scaling with $M$ of the Minkowski minimum $\lambda_d(M)$ (as well as the bounding radius $r_M(\cT)$ and the exact or approximated distances $\distC_M$ and $\dist_M$). In view of this natural scaling, we may therefore limit our attention to matrices satisfying $\det (M) = 1$, in which case the average upper estimate \iref{avgMu} is fully independent of $M$.
In contrast the uniform upper bound $\|M\|^\frac 1 2 \geq \lambda_d(M)$ grows like a power of the anisotropy ratio, since
$$
\kappa(M)^\frac 1 d \leq \frac {\|M\|^\frac 1 2}{(\det M)^\frac 1 {2d}} \leq \kappa(M)^\frac {d-1} d.
$$

As established in \ref{th:mu}, and illustrated on Figure \ref{fig:minkowski}, such large values of $\lambda_d(M)$ are statistically rare - in fact they correspond to pathological situations where an eigenvector associated to the small eigenvalue of $M$ is equal or close to a small element of $\Z^d\sm \{0\}$, see \S \ref{sec:mu}. 
Theorems \ref{th:estim} and \ref{th:mu} therefore yield together a genuine result of non-linear approximation, which shows that the accuracy of the algorithm introduced in \cite{M12} does not degrade as the anisotropy ratio $\kappa(M)$ increases, at least in the case of a constant metric and in an average sense over all anisotropy orientations.  

We prove Theorems \ref{th:estim} and \ref{th:mu} respectively in \S \ref{sec:acc} and \S \ref{sec:mu}. A numerical experiment is presented in \S \ref{sec:num}, as well as the extension of the algorithm to dimension four. 

\section{Error analysis of anisotropic fast marching}
\label{sec:acc}

This section is devoted to the proof of Theorem \ref{th:estim}, which is based on the notions of (discrete) sub-solution and super-solution of the fixed point discretization \iref{eikonal_disc} of the eikonal equation. We refer to \cite{BR06} for more discussions on these notions. 

We allow discrete maps to take the value $+\infty$. 

\begin{definition}
A discrete map $\dist : \Z^d \to \R_+ \cup \{\infty\}$ is a super-solution (resp.\ sub-solution) of the system \iref{eikonal_disc_2}, if it satisfies
\be
\left\{
\begin{array}{lc}
\dist(z) \geq \Lambda(\dist,z), & z\in \Z^d \sm \{0\},\\
\dist(0) = 0.
\end{array}
\right.
\qquad \left(\text{resp. }
\left\{
\begin{array}{lc}
\dist(z) \leq \Lambda(\dist,z), & z\in \Z^d \sm \{0\},\\
\dist(0) = 0.
\end{array}
\right.
\right)
\ee
\end{definition}

The exact continuous solution of the eikonal equation happens, in our specific setting, to be a discrete sub-solution, as shown in the next proposition.

\begin{prop}
\label{prop:exact}
The restriction of the exact solution $\distC_M(z) = \|z\|_M$, of the continuous eikonal equation \iref{eikonal}, to the lattice $\Z^d$, is a discrete sub-solution of the system \iref{eikonal_disc_2}.
\end{prop}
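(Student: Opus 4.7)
The plan is to verify the two defining inequalities of a discrete sub-solution. The boundary condition $\distC_M(0) = \|0\|_M = 0$ is immediate, so the entire content is the inequality $\|z\|_M \leq \Lambda(\distC_M, z)$ for every $z \in \Z^d \sm \{0\}$.

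Unfolding the definition \iref{defLambda}, it suffices to show that for every admissible choice of $1 \leq k \leq d$, of coefficients $\alpha_1, \dots, \alpha_k \in \R_+$ with $\sum_i \alpha_i = 1$, and of non-zero vertices $v_1, \dots, v_k$ of a common simplex $T \in \cT$, one has
\[
\|z\|_M \;\leq\; \Bigl\|\sum_{i=1}^k \alpha_i v_i \Bigr\|_M + \sum_{i=1}^k \alpha_i \, \|z+v_i\|_M.
\]
The key identity, which uses only that the $\alpha_i$ form a partition of unity, is
\[
z \;=\; \sum_{i=1}^k \alpha_i\, (z+v_i) \;-\; \sum_{i=1}^k \alpha_i v_i.
\]
From here the estimate is a two-step application of the triangle inequality for the norm $\|\cdot\|_M$: first split the right-hand side into the two sums, and then use convexity (i.e.\ the triangle inequality applied to the convex combination) to bound $\|\sum_i \alpha_i(z+v_i)\|_M \leq \sum_i \alpha_i \|z+v_i\|_M$. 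Combining these two bounds yields exactly the required inequality.

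There is no real obstacle; the statement is essentially a restatement of the convexity of the norm $\|\cdot\|_M$ phrased in terms of the Hopf–Lax operator. The only point worth mentioning is that the acuteness condition \iref{acuteness} and the geometry of the $M$-reduced mesh $\cT$ play no role here — they will only be needed later, for the matching upper-bound (super-solution) estimate. Similarly, the convention $0\cdot \infty = 0$ is irrelevant in this proposition since $\distC_M$ takes only finite values on $\Z^d$.
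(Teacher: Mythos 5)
Your proof is correct and uses exactly the same two ingredients as the paper's: the convexity of $\|\cdot\|_M$ applied to the convex combination $\sum_i \alpha_i (z+v_i)$, followed by the triangle inequality, which is precisely the chain $\sum_i \alpha_i \distC_M(z+v_i) \geq \distC_M(z+\sum_i \alpha_i v_i) \geq \distC_M(z) - \|\sum_i \alpha_i v_i\|_M$ in the paper. Your closing remarks (that acuteness plays no role here and that $\distC_M$ is finite) are accurate but not needed.
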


\begin{proof}
Consider $z\in \Z^d$, $1 \leq k \leq d$, non-negative coefficients $(\alpha_i)_{i=1}^k$, and vertices $(v_i)_{i=1}^k$ of a common simplex $T \in \cT$ as they appear in the Hopf-Lax operator \iref{defLambda}.
Since $\alpha_1+ \cdots+\alpha_d=1$ we obtain using first convexity and second the triangle inequality 
$$
\sum_{1 \leq i \leq k} \alpha_i \, \distC_M(z+v_i) \geq \distC_M\left(z+ \sum_{1 \leq i \leq k} \alpha_i v_i\right) \geq \distC_M(z) - \left\| \sum_{1 \leq i \leq k} \alpha_i v_i \right\|_M.
$$
It follows that $\distC_M(z) \leq \Lambda(\distC_M,z)$. Since $\distC_M(0) = 0$ this concludes the proof.
\end{proof}

We next recall an argument often referred to as the Causality property, which is at the foundation of the fast marching algorithm. See \cite{SV03} or \cite{M12} for a proof. 

\begin{lemma}[Sethian Vladimirsky, 2000, \cite{SV03}]
\label{lem:upwind}
Let $z\in \Z^d$ and let $\dist : \Z^d \to \R_+ \cup \{\infty\}$. If $\Lambda(\dist,z)<\infty$, then by compactness the minimum \iref{defLambda} defining the Hopf-Lax update $\Lambda(\dist,z)$ is attained for some $k\in \{1, \cdots, d\}$, positive coefficients $\alpha_1, \cdots, \alpha_k >0$, $\alpha_1+\cdots+\alpha_k=1$, and non-zero vertices $v_1, \cdots, v_k$ of a common simplex $T \in \cT$. We then have for all $1 \leq i \leq k$
\be
\label{upwind}
\dist(z+v_i) < \Lambda(\dist,z).
\ee
\end{lemma}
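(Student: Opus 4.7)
The plan is to exhibit an optimal tuple in \iref{defLambda} with all weights $\alpha_i$ strictly positive, verify that the associated combination $v := \sum_{i=1}^k \alpha_i v_i$ is non-zero (so that the $M$-norm is differentiable there), and then read off the strict inequality \iref{upwind} from a first-order optimality condition combined with the acuteness hypothesis \iref{acuteness}.

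\emph{Step 1: existence of a positive minimizer.} The minimum in \iref{defLambda} runs over the finite set of choices of $k\in\{1,\ldots,d\}$ and simplex $T\in\cT$, and, for each such choice, over the compact simplex $\{(\alpha_i)\in\R_+^k : \sum_i \alpha_i=1\}$. Since the objective is continuous and finite by assumption, the minimum is attained. If some $\alpha_{i_0}$ vanishes at such a minimizer, I drop the vertex $v_{i_0}$ and decrement $k$, producing another minimizer with the same value; iterating, I may assume all $\alpha_i>0$.

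\emph{Step 2: non-degeneracy of $v$.} Expanding and using \iref{acuteness},
\be
\|v\|_M^2 = \sum_{i,j=1}^k \alpha_i \alpha_j \<v_i, v_j\>_M \;\geq\; \sum_{i=1}^k \alpha_i^2 \|v_i\|_M^2 \;>\;0,
\ee
since the $v_i$ are non-zero lattice vertices and the $\alpha_i$ are positive. Hence $\|\cdot\|_M$ is smooth in a neighbourhood of $v$.

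\emph{Step 3: KKT and strict inequality.} The objective $F(\alpha) := \|\sum_i \alpha_i v_i\|_M + \sum_i \alpha_i \dist(z+v_i)$ is then smooth at the minimizer, which lies in the relative interior of the affine constraint $\{\sum_i \alpha_i=1\}$. The Lagrange multiplier rule provides a scalar $\mu$ with
\be
\frac{\<v_i, v\>_M}{\|v\|_M} + \dist(z+v_i) = \mu, \qquad 1\leq i \leq k;
\ee
multiplying by $\alpha_i$ and summing identifies $\mu = \|v\|_M + \sum_i \alpha_i \dist(z+v_i) = \Lambda(\dist,z)$. Acuteness then gives $\<v_i, v\>_M = \sum_j \alpha_j \<v_i, v_j\>_M \geq \alpha_i \|v_i\|_M^2 > 0$, whence $\dist(z+v_i) = \Lambda(\dist,z) - \<v_i, v\>_M/\|v\|_M < \Lambda(\dist,z)$, which is \iref{upwind}.

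The only delicate point is the non-differentiability of $\|\cdot\|_M$ at the origin, which is precisely what Step~2 rules out; acuteness \iref{acuteness} plays a double role, first ensuring $v\neq 0$ in Step~2, and then turning the optimality equality of Step~3 into a strict inequality.
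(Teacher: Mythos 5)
Your proof is correct. The paper itself does not prove this lemma (it simply cites \cite{SV03} and \cite{M12}), and your argument --- reduce to strictly positive weights, use acuteness to get $v=\sum_i\alpha_i v_i\neq 0$ so the norm is smooth there, then read off $\dist(z+v_i)=\Lambda(\dist,z)-\<v_i,v\>_M/\|v\|_M<\Lambda(\dist,z)$ from the Lagrange condition --- is exactly the standard causality argument those references give; the only cosmetic point is that the objective is merely lower semicontinuous (not continuous) where $\dist$ takes the value $+\infty$, which still suffices for attainment of the minimum.
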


The next proposition shows that any discrete super-solution $\dist_+$ is larger than any sub-solution $\dist_-$. A similar property is proved in \cite{BR06}, on a finite discrete domain, by studying the point where the difference $(\dist_+-\dist_-)$ reaches its minimum. Since the domain $\Z^d$ is infinite we cannot rely on this approach here, and we take advantage instead of the causality property.

\begin{prop}
\label{prop:comp}
Let $\dist_+,\dist_- : \Z^d \to \R_+\cup\{\infty\}$ be respectively a discrete super-solution and sub-solution of the system \iref{eikonal_disc_2}. Then
$
\dist_- \leq \dist_+ \text{ on } \Z^d.
$
\end{prop}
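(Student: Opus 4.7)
The plan is to argue by contradiction: assume $A := \{z \in \Z^d : \dist_-(z) > \dist_+(z)\}$ is non-empty, locate a point $z^* \in A$ at which $\dist_+$ is minimal over $A$, and then use the causality Lemma \ref{lem:upwind} at $z^*$ to exhibit a neighbor $z^* + v_i \in A$ with strictly smaller $\dist_+$ value, contradicting the minimality. The main obstacle, relative to the finite-domain comparison proof of \cite{BR06}, is that $\Z^d$ is infinite, so a priori the infimum of $\dist_+$ on $A$ need not be attained.

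To circumvent this, I would first show that each sublevel set $S_v := \{z \in \Z^d : \dist_+(z) \leq v\}$ is finite. For any $z \neq 0$ with $\dist_+(z) < \infty$, Lemma \ref{lem:upwind} combined with the super-solution property yields positive weights $(\alpha_i)_{i=1}^k$ summing to $1$ and non-zero vertices $(v_i)_{i=1}^k$ of a common simplex $T \in \cT$ with
$$ \dist_+(z) \;\geq\; \Lambda(\dist_+, z) \;=\; \Big\|\sum_i \alpha_i v_i\Big\|_M + \sum_i \alpha_i \dist_+(z + v_i).$$
Expanding the norm and invoking the acuteness condition \iref{acuteness}, together with the Cauchy–Schwarz bound $\sum_i \alpha_i^2 \geq 1/k \geq 1/d$, gives the uniform lower bound $\|\sum_i \alpha_i v_i\|_M \geq c_0/\sqrt d$, where $c_0 > 0$ is the smallest $\|\cdot\|_M$-norm of a non-zero vertex of $\cT$ (positive and finite since $\cT$ is a finite mesh). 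Choosing $i^*$ minimising $\dist_+(z + v_i)$ then yields the descent $\dist_+(z + v_{i^*}) \leq \dist_+(z) - c_0/\sqrt d$. Iterating $z \mapsto z + v_{i^*(z)}$ must therefore terminate after at most $\lceil \sqrt d\, \dist_+(z)/c_0 \rceil$ steps, and the same lower bound $\Lambda(\dist_+,z) \geq c_0/\sqrt d$ forces $\dist_+ > 0$ on $\Z^d \setminus \{0\}$, so the only possible termination point is the origin. Since each step translates by a mesh vertex of bounded Euclidean norm, $\|z\|$ is controlled linearly in $\dist_+(z)$, proving that $S_v$ is finite.

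With the finiteness of sublevel sets in hand, pick any $z_0 \in A$; then $A \cap S_{\dist_+(z_0)}$ is non-empty and finite, so $V^* := \inf_{z \in A} \dist_+(z)$ is attained at some $z^* \in A$. Applying Lemma \ref{lem:upwind} at $z^*$ supplies an optimal tuple $(\alpha_i, v_i)$ realising $\Lambda(\dist_+, z^*)$, which is also admissible in the minimisation defining $\Lambda(\dist_-, z^*)$; the sub-solution inequality therefore gives
$$ \dist_-(z^*) \;\leq\; \Lambda(\dist_-, z^*) \;\leq\; \Big\|\sum_i \alpha_i v_i\Big\|_M + \sum_i \alpha_i \dist_-(z^* + v_i).$$
Subtracting this from the matching identity for $\dist_+$ and using $\dist_-(z^*) - \dist_+(z^*) > 0$, some index $i$ must satisfy $\dist_-(z^* + v_i) > \dist_+(z^* + v_i)$, i.e.\ $z^* + v_i \in A$. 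Lemma \ref{lem:upwind} simultaneously yields the strict inequality $\dist_+(z^* + v_i) < \Lambda(\dist_+, z^*) \leq \dist_+(z^*) = V^*$, contradicting the minimality of $V^*$ and closing the argument.
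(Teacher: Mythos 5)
Your proof is correct and follows essentially the same route as the paper: both establish that the sublevel sets of $\dist_+$ are finite via a uniform positive lower bound on the term $\bigl\|\sum_i \alpha_i v_i\bigr\|_M$ in the Hopf--Lax update (you use the acuteness condition and Cauchy--Schwarz to get $c_0/\sqrt d$, the paper uses the inradius of the mesh), and then invoke the causality Lemma \ref{lem:upwind} to compare $\dist_-$ and $\dist_+$ at points ordered by increasing $\dist_+$-value. The only difference is packaging: the paper runs an induction along the sorted sequence $(z_n)$, whereas you take a minimal counterexample and derive a contradiction, which is the same argument in contrapositive form.
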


\begin{proof}
We denote by $r$ (resp. $R$) the maximum (resp. minimum) radius such that the ellipsoid $\{ u\in \R^d;\,  \|u\|_M \leq r\}$ is contained in (resp. contains) the union of the elements of $\cT$, which is by assumption a compact neighborhood of the origin. 
Using first the fact that $\dist_+$ is a super-solution, and second the definition \iref{defLambda} of $\Lambda$, we obtain that for each $z\in \Z^d \sm \{0\}$ there exists a vertex $v$ of a simplex $T \in \cT$ such that 
$$
\dist_+(z) \geq \Lambda(\dist_+ ,z) \geq  r + \dist_+(z+v).
$$

Since $\dist_+$ takes non-negative values, and $\|z+v\|_M \geq \|z\|_M-R$, it immediately follows that $\dist_+(z) \geq (r/R) \|z\|_M$. Hence the set $\{ z\in \Z^d;\ \dist_+(z) \leq K\}$ is finite for any constant $K<\infty$, and it is therefore possible to sort the elements of $\{ z\in \Z^d; \, \dist_+(z) < \infty\}$ into a sequence $(z_n)_{n \geq 0}$ ordered by increasing values : $\dist_+(z_n) \leq \dist_+(z_{n+1})$ for all $n\in \Z_+$.

We prove that $d_-(z_n) \leq d_+(z_n)$ by induction on $n \in \Z_+$. We first observe that $\dist_-(z_0) = 0 = \dist_+(z_0)$, since $z_0=0$, and we next consider an arbitrary but fixed $n \in \Z_+$. In view of Lemma \ref{lem:upwind}, there exists $k\in \{1, \cdots, d\}$, some positive coefficients $(\alpha_i)_{i=1}^k$ summing up to $1$, and $n_1, \cdots, n_k\in \Z_+$ such that 
$$
\Lambda(\dist_+,z_n) = \left\| \sum_{1 \leq i \leq k} \alpha_i (z-z_{n_i}) \right\|_M + \sum_{1 \leq i \leq k} \alpha_i \, \dist_+\left(z_{n_i}\right),
$$
Lemma \ref{lem:upwind} states in addition that $d_+(z_{n_i}) < \Lambda(\dist_+,z_n) \leq d_+(z_n)$, and therefore $n_i < n$ for all $1 \leq i \leq k$. It follows that $\dist_-(z_{n_i}) \leq \dist_+(z_{n_i})$ by induction, and therefore that $\Lambda(\dist_-,z_n) \leq \Lambda(\dist_+,z_n)$. This implies $\dist_-(z_n)\leq \dist_+(z_n)$, and concludes the proof of  this proposition.
\end{proof} 

\begin{lemma}
\label{lem:decomposition}
Let $z\in \Z^d$, and let $T \in \cT$ be such that $-z\in \R_+ T$. Denoting by $v_1, \cdots, v_d$ the non-zero vertices of $T$, there exists non-negative integers $\beta_1, \cdots, \beta_d$ such that 
\be
\label{zDecomposed}
z+ \beta_1 v_1+\cdots + \beta_d v_d = 0.
\ee
\end{lemma}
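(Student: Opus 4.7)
The plan is to exploit the unimodularity hidden in property (II) of the mesh: a simplex with vertices on $\Z^d$ and volume exactly $1/d!$ must have its edge vectors forming a $\Z$-basis of $\Z^d$.

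First I would apply property (II) to the simplex $T\in\cT$. Since $T$ has vertices $0, v_1, \ldots, v_d$ on the lattice and volume $1/d!$, the parallelepiped spanned by $v_1,\ldots,v_d$ has volume exactly $1$. Equivalently, $|\det(v_1,\ldots,v_d)| = 1$, which means $(v_1,\ldots,v_d)$ is a basis of $\Z^d$ as a free abelian group. In particular, any $z\in\Z^d$ admits a unique expansion $z = \alpha_1 v_1 + \cdots + \alpha_d v_d$ with $\alpha_1,\ldots,\alpha_d \in \Z$.

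Next I would use the geometric hypothesis $-z\in \R_+ T$. Since $T$ is the convex hull of $\{0, v_1,\ldots,v_d\}$, its positive cone $\R_+ T$ coincides with $\{t_1 v_1 + \cdots + t_d v_d : t_i \geq 0\}$. Hence there exist real numbers $t_i \geq 0$ with $-z = t_1 v_1 + \cdots + t_d v_d$. By the uniqueness of the expansion in the $\R$-basis $(v_1,\ldots,v_d)$, the integer coefficients of $-z$ must equal the $t_i$, giving $-\alpha_i = t_i \geq 0$ for every $i$. Setting $\beta_i := -\alpha_i$ yields non-negative integers satisfying $z + \beta_1 v_1 + \cdots + \beta_d v_d = 0$, which is the desired identity.

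There is no real obstacle here; the only delicate point is recognizing that the volume normalization in property (II) is precisely what forces $(v_1,\ldots,v_d)$ to be a $\Z$-basis of $\Z^d$ (and not merely an $\R$-basis), since this is what allows the real coefficients arising from the cone condition to coincide with the integer coefficients arising from the lattice expansion.
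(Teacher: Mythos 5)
Your proof is correct and follows essentially the same route as the paper: the cone condition $-z\in\R_+ T$ produces non-negative real coefficients, and the unimodularity $|\det(v_1,\dots,v_d)|=d!\,|T|=1$ from property (II) forces $(v_1,\dots,v_d)$ to be a $\Z$-basis, so those coefficients are integers. The paper phrases it as ``the real coefficients must be integers'' while you phrase it as ``the integer coefficients must be non-negative,'' but the content is identical.
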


\begin{proof}
Since the union of the elements of $\cT$ is a neighborhood of the origin (property (I) of the mesh $\cT$), there exists as announced a simplex $T$ such that $-z\in \R_+ T$. This inclusion implies the existence of non-negative \emph{reals} such that $z+ \beta_1 v_1+\cdots + \beta_d v_d = 0$, and we thus only have to show that $\beta_1, \cdots, \beta_d$ are integers. 
Since the vertices $v_1, \cdots, v_d$ belong to $\Z^d$, and since $|\det(v_1, \cdots, v_d)| = d! |T| = 1$ by assumption (property (II)), $(v_1, \cdots, v_d)$ is a basis of the lattice $\Z^d$. The coefficients $\beta_1, \cdots, \beta_d$ are therefore integers, which concludes the proof. 
\end{proof}

The variant studied here of the fast marching algorithm was shown in \cite{M12} to produce a solution of the system \iref{eikonal_disc} in a finite number of steps, in the case of a general continuous metric on a compact periodic domain. This algorithm involves local meshes $\cT_z$ for each discrete point $z$, which in the case of a constant metric are all equal to $\cT$. In order to adapt the proof of \cite{M12} to the case of a constant metric on an infinite domain, we briefly outline its main features.
This algorithm produces a decreasing sequence $(\dist_n)_{n \geq 0}$ of super-solutions of the system \iref{eikonal_disc_2} : $\dist_n(z) \leq \dist_m(z)$ for all $n\leq m$ and all $z\in \Z^d$. The first element $\dist_0$ of this sequence satisfies $\dist_0(0) = 0$ and $\dist_0(z) = \infty$ for all $z\neq 0$.
Simultaneously, this algorithm produces an injective sequence of points $(z_n)_{n \geq 0}$ which has the following properties :

(i)\phantom{ii} $\dist_n(z_n)$ is the minimum value of $\dist_n$ on the set $\Z^d \sm \{z_m;\, m<n\}$.

(ii)\phantom{i} $\dist_n(z_n) = d_m(z_n) = \Lambda(\dist_m,z_n)$ for all $m\geq n>0$.

(iii) $\dist_{n+1}$ takes finite values on the set 
$\{z\in \Z^d;\, z_n = z+v \text{ for some vertex $v$ of } \cT_z\}$, 
and coincides with $\dist_n$ outside of this set. 
\begin{prop}
\label{prop:solve}
The set $Z := \{z_n; \, n \geq 0\}$ coincides with $\Z^d$, and a solution $\dist$ to the system \iref{eikonal_disc_2} is given by the decreasing limit, for all $z\in \Z^d$, 
\be
\label{lim_dist}
\dist(z) := \lim_{n \to \infty} \dist_n(z).
\ee
\end{prop}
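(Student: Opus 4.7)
My plan is to exploit the decreasing limit $\dist := \lim_n \dist_n$---whose existence is immediate since values are non-negative and $\dist_{n+1} \leq \dist_n$ by~(iii)---by reducing everything to the accepted values $a_n := \dist_n(z_n)$. The fixed point equation at any accepted point $z = z_n$ will follow by passing to the limit $m \to \infty$ in property~(ii), once we know $\dist$ is finite at the finitely many Hopf--Lax neighbors, so the main work lies in establishing $Z = \Z^d$.

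The central quantitative input is that $a_n \to \infty$. Each $\dist_n$ is by construction a super-solution, and $\distC_M$ is a sub-solution by Proposition~\ref{prop:exact}; the comparison Proposition~\ref{prop:comp} therefore forces $\dist_n \geq \distC_M$ pointwise, so $a_n \geq \|z_n\|_M$. Since $(z_n)$ is injective in $\Z^d$ it must leave every bounded set, so $\|z_n\|_M$ is unbounded. Coupled with the monotonicity $a_{n+1} \geq a_n$---which I derive from Lemma~\ref{lem:upwind} by checking that either $\dist_{n+1}(z_{n+1}) = \dist_n(z_{n+1}) \geq a_n$ by~(i), or else $z_n$ participates as a Hopf--Lax neighbor in the update of $z_{n+1}$ and~\iref{upwind} forces the strict inequality $\dist_{n+1}(z_{n+1}) > a_n$---this yields $a_n \to \infty$.

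Given an arbitrary target $z^* \in \Z^d$, Lemma~\ref{lem:decomposition} writes $z^* = -\beta_1 v_1 - \cdots - \beta_d v_d$ for vertices $v_i$ of some $T \in \cT$ and $\beta_i \in \Z_+$, which defines a lattice path $p_0 = 0, p_1, \ldots, p_N = z^*$ obtained by subtracting one $v_i$ per step. I argue inductively that each $p_j \in Z$: if $p_j = z_n$, then $p_{j+1} = p_j - v$ lies in the neighbor set of $z_n$ described in~(iii), hence $\dist_{n+1}(p_{j+1}) < \infty$; as $\dist_m(p_{j+1})$ is non-increasing in $m$ it stays bounded by a constant $C$, and if $p_{j+1}$ were never accepted then~(i) would give $a_m \leq \dist_m(p_{j+1}) \leq C$ for all $m > n$, contradicting $a_m \to \infty$. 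Hence $Z = \Z^d$, and in particular $\dist(z) = a_n < \infty$ for $z = z_n$ throughout $\Z^d$. To conclude, for each $n \geq 1$ I let $m \to \infty$ in $\dist_n(z_n) = \Lambda(\dist_m, z_n)$ from~(ii): continuity of $\Lambda$ in its finitely many (now finite) neighbor arguments gives $\dist(z_n) = \Lambda(\dist, z_n)$, and together with $\dist(0) = 0$ this shows $\dist$ solves~\iref{eikonal_disc_2}. The main technical obstacle is justifying the monotonicity $a_{n+1} \geq a_n$ rigorously from the abstract properties~(i)--(iii) and the causality lemma, since the algorithm itself is only sketched in the excerpt.
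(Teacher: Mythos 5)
Your argument is correct and follows essentially the same route as the paper: comparison with the sub-solution $\|\cdot\|_M$ to bound the accepted values from below, properties (i) and (iii) to show that every neighbor $z_n-v$ of an accepted point is itself eventually accepted, Lemma \ref{lem:decomposition} to propagate acceptance from the origin to an arbitrary lattice point, and property (ii) to pass to the limit in the fixed-point equation. The one step you flag as delicate --- the monotonicity $a_{n+1}\geq a_n$ of the accepted values --- is in fact dispensable: since $(z_m)_{m\geq 0}$ is injective in $\Z^d$ and $\{x\in\Z^d:\|x\|_M\leq C\}$ is finite for every $C$, the comparison bound $a_m=\dist_m(z_m)\geq\|z_m\|_M$ already forces $a_m\to\infty$ (this finite-counting argument is exactly how the paper proceeds), so you need not extract anything further from the causality lemma.
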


\begin{proof}
We first observe that $\dist_n(z) \geq \|z\|_M$ for all $z\in \Z^d$ and all $n \geq 0$, since $\dist_n$ is a super-solution  of the system \iref{eikonal_disc_2}, and  $z\mapsto \|z\|_M$ a sub-solution. 

Let $n \geq 0$ be arbitrary, and let $v$ be an arbitrary vertex of $\cT$. Since $d_{n+1}(z_n-v) < \infty$ (using property (iii) of the sequence $(z_n)_{n \geq 0}$, and recalling that $\cT=\cT_z$ for a constant metric), the cardinality $n' := \# \{ x \in \Z^d; \, \|x\|_M \leq d_{n+1}(z_n-v)\}$ is finite, which shows that $z_n-v = z_m$ for some $m<n'$ (using (i)).

As a result for any $z\in Z$ and any vertex $v$ of $\cT$ we have $z-v\in Z$. Since the minimum value of $d_0$ is attained at the origin only, we have $z_0=0$ and therefore $0\in Z$. Since any $z\in \Z^d$ can be written under the form \iref{zDecomposed}, it follows that $Z = \Z^d$. In view of (ii), the discrete map defined by \iref{lim_dist} is a solution of the system \iref{eikonal_disc_2}, which concludes the proof.
\end{proof}

The fast marching algorithm is regarded as a ``one pass algorithm'' because the sequence $(z_n)_{n \geq 0}$ is injective. In the case of a constant metric this implies in particular, using property (iii), that for any $z\in \Z^d$, the cardinality of $\{ n \geq 0;\, d_{n+1}(z) \neq d_n(z)\}$ is bounded by the number of vertices of $\cT$. Proposition  \ref{prop:solve} establishes the first part of Theorem \ref{th:estim}, while Propositions \ref{prop:exact} and \ref{prop:comp} imply the lower estimates for $\dist_M$ in \iref{approx} and \iref{approxT}.

\subsection{Construction of a discrete super-solution}

We construct in this section an explicit discrete super-solution $\dist_+$ of the system \iref{eikonal_disc_2}. Using its expression, and the fact that super-solutions are larger than solutions, we obtain the upper estimates for $\dist_M$ announced in Theorem \ref{th:estim}, \iref{approx} and \iref{approxT}, which concludes its proof.

We consider an arbitrary simplex $T \in \cT$, which is fixed throughout this section, and 
we define a discrete map $\dist_+ : \Z^d \to \R_+ \cup \{\infty\}$ as follows. If $z\in \Z^d$ is such that $-z \notin \R_+T$, then we set $\dist_+(z):=\infty$. Otherwise we write $z$ under the form \iref{zDecomposed}, and we define
\be
\label{defd+}
\dist_+(z) := \|z\|_M + (\sin \theta_M(T))^2 \sum_{1 \leq i\leq d} s(\beta_i) \|v_i\|_M  , \quad \text{ where } s(\beta) :=  \sum_{1 \leq k \leq \beta} \frac 1 k. 
\ee
The summation appearing in $\dist_+$ can be bounded as follows :
\be
\label{ineqSum}
\sum_{1 \leq i\leq d} s(\beta_i) \|v_i\|_M  \leq \sum_{1 \leq i\leq d}  \|v_i\|_M \left(1+\ln^+\left(\frac{\|z\|_M}{\|v_i\|_M}\right)\right) \leq \, d \, r_M(T) \left(1+\ln^+\left(\frac{\|z\|_M}{r_M(T)}\right)\right)
\ee
where we used in the first inequality the upper bound $\beta_i \leq \|z\|_M/\|v_i\|_M$ (due to the acuteness condition \iref{acuteness}), and $\sum_{k=1}^\beta k^{-1} \leq 1+\ln^+(\beta)$. For the second inequality we used the growth of $r \mapsto r \ln (1+\ln^+ (\lambda/r))$ on $\R_+^*$, for any fixed $\lambda>0$. Let us assume for a moment that $\dist_+$ is a super-solution, which implies that $\dist_+ \geq \dist_M$. Then combining \iref{ineqSum} and \iref{defd+} we obtain the upper estimates in \iref{approxT}. Since the simplex $T\in \cT$ is arbitrary, we also obtain \iref{approx}, which concludes the proof of Theorem \ref{th:estim}. 

In order to prove that $\dist_+$ is a super-solution, we need a preliminary lemma, on the Taylor development of the $\|\cdot \|_M$ norm, under an acuteness condition. 

\begin{lemma}
\label{lem:Taylor}
Let $z,v\in \R^d\sm\{0\}$ and $\theta_0\in [0,\pi/2]$ be such that 
\be
\label{angleUE}
\<v,z-v\>_M \geq \cos(\theta_0) \|v\|_M \|z-v\|_M.
\ee
Then
$$
\|z-v\|_M \leq \|z\|_M - \frac{\<z,v\>_M}{\|z\|_M}+\frac{(\sin \theta_0)^2\|v\|_M^2}{\|z\|_M}.
$$
\end{lemma}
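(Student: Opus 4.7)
The plan is to convert the hypothesis and the conclusion into purely algebraic inequalities between the three mutual $M$-distances $a := \|z\|_M$, $b := \|v\|_M$, $c := \|z-v\|_M$, and then check the resulting statement by a direct computation. The polarization identity gives $c^2 = a^2 - 2\<z,v\>_M + b^2$, hence $\<v,z-v\>_M = \<z,v\>_M - b^2 = (a^2 - b^2 - c^2)/2$; substituting this into the acuteness assumption \iref{angleUE}, the hypothesis rewrites as
\be
\label{alg1}
a^2 \geq b^2 + c^2 + 2bc\cos\theta_0,
\ee
which in particular forces $a \geq c$ since $\cos\theta_0 \geq 0$.

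Next I would multiply the target inequality by $a > 0$, substitute $\<z,v\>_M = (a^2 + b^2 - c^2)/2$, and rearrange to obtain the equivalent form $(a-c)^2 \geq b^2 \cos 2\theta_0$. If $\theta_0 \geq \pi/4$ the right-hand side is non-positive and this is immediate. Otherwise I would prove the equivalent $a \geq c + b\sqrt{\cos 2\theta_0}$; both sides being non-negative, squaring and then bounding $a^2$ from below by \iref{alg1} reduces the task to
\be
2b^2 \sin^2\theta_0 + 2bc \bigl(\cos\theta_0 - \sqrt{\cos 2\theta_0}\bigr) \geq 0,
\ee
which holds because $\cos^2\theta_0 = 1 - \sin^2\theta_0 \geq 1 - 2\sin^2\theta_0 = \cos 2\theta_0$.

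I do not anticipate any real obstacle: the lemma is at its heart an elementary statement about the three sides of an $M$-triangle. The only care required is for the sign bookkeeping when squaring, and the case split at $\theta_0 = \pi/4$ handles that cleanly. Geometrically, $c$ is close to $a - \<z,v\>_M/a$, the first-order Taylor expansion of $\|z-v\|_M$ in $v$ around $v=0$, and the acuteness condition controls the quadratic remainder through the factor $\sin^2\theta_0$.
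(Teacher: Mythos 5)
Your proof is correct, but it takes a genuinely different route from the paper's. The paper proves the lemma analytically: it applies the Taylor formula with integral remainder to the smooth function $t \mapsto \|z-tv\|_M$ on $[0,1]$, extends the acuteness hypothesis \iref{angleUE} from the pair $(v,z-v)$ to every pair $(v,z-tv)$ along the segment, derives the auxiliary bound $\|z-tv\|_M \geq (1-t)\|z\|_M$, and uses these two facts to bound the second-order integrand by $(\sin\theta_0)^2\|v\|_M^2/\|z\|_M$. You instead reduce everything to the law of cosines for the $M$-triangle with sides $a=\|z\|_M$, $b=\|v\|_M$, $c=\|z-v\|_M$: the hypothesis becomes $a^2 \geq b^2+c^2+2bc\cos\theta_0$ and the conclusion becomes $(a-c)^2 \geq b^2\cos 2\theta_0$, which you verify by a case split at $\theta_0=\pi/4$ and the elementary inequality $\cos^2\theta_0 \geq \cos 2\theta_0$. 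I checked the algebra (the polarization identities, the equivalence of the target with $(a-c)^2\geq b^2\cos 2\theta_0$ after multiplying by $a>0$, the legitimacy of squaring since $a\geq c$ and both sides of $a\geq c+b\sqrt{\cos 2\theta_0}$ are non-negative, and the final two-term positivity) and it all holds. Your argument is more elementary — no calculus, no estimates along a segment — and it is a clean, finite verification; what the paper's version buys is that it exhibits the inequality explicitly as a bound on the remainder of the first-order Taylor expansion of $\|z-\cdot\|_M$, which is exactly how the lemma is then used in Proposition \ref{prop:infinite_needed}, and its segment-wise structure is the natural one if one wanted to replace $\|\cdot\|_M$ by a more general smooth norm. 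Either proof is acceptable here.
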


\begin{proof}
Since the vectors $v$ and $z-v$ form an angle smaller than $\theta_0$ \iref{angleUE}, this is also the case for the vectors $v$ and $z-t v = z-v + (1-t) v$, for $t\in [0,1]$. Therefore $\<v,z-t v\>_M \geq \cos(\theta_0) \|v\|_M \|z-t v\|_M$, for $t\in [0,1]$.
On the other hand we have $\|z\|_M \|v\|_M \geq \<z,v\>_M \geq \<z-v,v\>_M+ \|v\|_M^2 \geq \|v\|_M^2$, hence $\|z\|_M \geq \|v\|_M$ which implies that $\|z- t v\|_M \geq \|z\|_M-t \|v\|_M \geq (1-t)\|z\|_M$, for $t\in [0,1]$.

The Taylor formula with integral rest, applied to the smooth function $t \mapsto \|z-t v\|_M$, yields
$$
\|z-v\|_M = \|z\|_M - \frac{\<z,v\>_M}{\|z\|_M} + \int_0^1 \left( \frac{\|v\|_M^2}{\|z-t v\|_M}  - \frac {\<z-t v,v\>_M^2}{\|z-t v\|_M^3} \right) (1-t) dt.
$$
Injecting the lower bounds $\<z - t v,v\>_M \geq \cos(\theta_0) \|v\|_M \|z-t v\|_M$, and $\|z-t v\|_M\geq (1-t) \|z\|_M$, we obtain that the integrated term is bounded above by $(\sin \theta_0)^2 \|v\|_M^2/\|z\|_M$. This concludes the proof of this lemma.
\end{proof}

\begin{prop}
\label{prop:infinite_needed}
The map $\dist_+$ defined by \iref{defd+} is a discrete super-solution of \iref{eikonal_disc_2}.
\end{prop}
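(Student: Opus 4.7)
The plan is to exhibit, for each $z \in \Z^d \sm \{0\}$, an admissible tuple in the infimum \iref{defLambda} whose value is at most $\dist_+(z)$. The super-solution inequality is trivial when $-z \notin \R_+ T$ since then $\dist_+(z) = \infty$, so fix $z \neq 0$ with $-z \in \R_+ T$. Writing $-z = \beta_1 v_1 + \cdots + \beta_d v_d$ via Lemma \ref{lem:decomposition}, I would set $I := \{i : \beta_i \geq 1\}$ (non-empty since $z \neq 0$) and $B := \sum_{i \in I} \beta_i$, and take as test family in \iref{defLambda} the subset $\{v_i\}_{i \in I}$ of vertices of $T$ with barycentric weights $\alpha_i := \beta_i/B$.

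This choice is engineered so that $\sum_i \alpha_i v_i = -z/B$, hence $\|\sum_i \alpha_i v_i\|_M = \|z\|_M/B$. For each $i \in I$ one has $-(z+v_i) = (\beta_i-1)v_i + \sum_{j \neq i}\beta_j v_j \in \R_+ T$, so $\dist_+(z+v_i)$ is given by \iref{defd+}; using $s(\beta_i) - s(\beta_i - 1) = 1/\beta_i$, a direct expansion shows that the candidate Hopf--Lax value equals
$$
\frac{\|z\|_M}{B} + \sum_{i \in I} \alpha_i \|z + v_i\|_M + \dist_+(z) - \|z\|_M - \frac{(\sin\theta_M(T))^2}{B}\sum_{i \in I}\|v_i\|_M,
$$
so the super-solution property reduces to controlling $\sum_i \alpha_i \|z + v_i\|_M$.

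For the latter I would invoke Lemma \ref{lem:Taylor} applied with $(-z, v_i)$ and $\theta_0 = \theta_M(T)$. Its angle hypothesis is supplied by acuteness: the vector $-z - v_i$ lies in the cone $\R_+ T$, and the definition \iref{RMT} of $\theta_M(T)$ combined with the triangle inequality for $\|\cdot\|_M$ yields $\<v_i, w\>_M \geq \cos(\theta_M(T))\|v_i\|_M \|w\|_M$ for every $w \in \R_+ T$. Averaging the conclusion of the lemma against the weights $\alpha_i$ and invoking $\sum_i \alpha_i \<z, v_i\>_M = \<z, -z/B\>_M = -\|z\|_M^2/B$, the linear Taylor correction cancels exactly with the $\|z\|_M/B$ term above, leaving a quadratic residue bounded by $(\sin\theta_M(T))^2 B^{-1} \sum_i \beta_i \|v_i\|_M^2 / \|z\|_M$ that must be compared with $(\sin\theta_M(T))^2 B^{-1} \sum_i \|v_i\|_M$.

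The verification then reduces to the termwise inequality $\beta_i \|v_i\|_M \leq \|z\|_M$, which follows by expanding $\|z\|_M^2 = \sum_{j,k}\beta_j\beta_k \<v_j, v_k\>_M \geq \beta_i^2 \|v_i\|_M^2$ and discarding the non-negative cross terms furnished by \iref{acuteness}. I expect the principal difficulty to be spotting the convex combination $\alpha_i = \beta_i/B$ that produces the exact cancellation of the linear Taylor correction; once that is found, the argument amounts to careful bookkeeping of the three distinct roles played by the acuteness condition: the membership $-z - v_i \in \R_+ T$, the cone-angle bound above, and the componentwise comparison of $\beta_i \|v_i\|_M$ to $\|z\|_M$.
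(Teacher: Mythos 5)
Your proposal is correct and follows essentially the same route as the paper: the convex weights $\alpha_i=\beta_i/B$, the application of Lemma \ref{lem:Taylor} to the pair $(z,-v_i)$ with the angle hypothesis supplied by $-(z+v_i)\in\R_+T$, the exact cancellation of the linear term via $\sum_i\alpha_i v_i=-z/B$, and the final comparison $\beta_i\|v_i\|_M\leq\|z\|_M$ deduced from acuteness. The only differences are bookkeeping: you restrict to the support $I$ and average before comparing residues, whereas the paper bounds $\dist_+(z)-\dist_+(z+v_i)$ index by index first.
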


\begin{proof}
As required, we have $\dist_+(0) = 0$. We thus consider an arbitrary $z\in \Z^d\sm\{0\}$, and we show in the following that $\dist_+(z) \geq \Lambda(\dist_+,z)$. There is nothing to prove if $\dist_+(z) = +\infty$, and we may therefore write $z$ under the form \iref{zDecomposed} : $z+\sum_{i=1}^d \beta_i v_i=0$, where $\beta_i\in \Z_+$ for $1 \leq i \leq d$ and $v_1, \cdots, v_d$ are the vertices of $T$. 

For all $1 \leq i \leq d$ such that $\beta_i >0$, we apply Lemma \ref{lem:Taylor} to $z$, $v:=-v_i$, and the angle $\theta_M(T)$. We obtain, since $\|z\|_M \geq \beta_i \|v_i\|_M$ (due to the acuteness condition \iref{acuteness}),
$$
\|z\|_M - \|z+v_i\|_M \geq \frac{\<-z,v_i\>_M}{\|z\|_M} - (\sin \theta_M(T))^2 \frac{\|v_i\|_M^2}{\|z\|_M} 
 \geq \frac{\<-z,v_i\>_M}{\|z\|_M} - (\sin \theta_M(T))^2 \frac{\|v_i\|_M}{\beta_i}.
$$
Hence $\dist_+(z)-\dist_+(z+v_i) \geq \<-z,v_i\>_M /\|z\|_M$, for all $1 \leq i \leq d$ such that $\beta_i>0$.
Defining $\alpha_i := \beta_i/(\beta_1+\cdots+\beta_d)$, in such way that $\sum_{i=1}^d \alpha_i v_i$ is positively proportional to $-z$, we obtain
$$
\left\| \sum_{1 \leq i \leq d} \alpha_i v_i \right\|_M + \sum_{1 \leq i \leq d} \alpha_i \, \dist_+(z+v_i) \leq 
\left\| \sum_{1 \leq i \leq d} \alpha_i v_i \right\|_M + \dist_+(z) + \left\<\sum_{1 \leq i \leq k} \alpha_i v_i, \, \frac z {\|z\|_M}\right\>_M = 
\dist_+(z).
$$
This establishes that $\dist_+(z) \geq \Lambda(\dist_+,z)$, which concludes the proof.
\end{proof}

\section{Average value of the outer radius $r_M(\cT)$}
\label{sec:mu}

This section is devoted to the proof of Theorem \ref{th:mu}, which begins with the upper bound \iref{RLambda} on $r_M(\cT)$. An inspection of the construction of the mesh $\cT$ in Proposition 1.9 or  Proposition 1.10 of \cite{M12}, in dimension $d=2$ or $3$ respectively, shows that its vertices have the form respectively
$$
\ve_1 u_1+ \ve_2 u_2, \ \text{ or } \ \ve_1 u_1+ \ve_2 u_2 + \ve_3 u_3,
$$
where $\|u_i\|_M = \lambda_i(M)$ and $\ve_i\in \{-1,0,1\}$, for all $1 \leq i \leq d$. The upper bound on $r_M(\cT)$ thus immediately follows from the triangle inequality and $\lambda_1(M) \leq \cdots \leq \lambda_d(M)$. In the four dimensional case, which is covered by Proposition \ref{prop4} of this paper, the vertices also have the form $\ve_1 u_1+ \ve_2 u_2 + \ve_3 u_3+\ve_4 u_4$, where three of the $|\ve_i|$ are bounded by $1$ and the remaining one is bounded by $2$. The announced inequality \iref{RLambda} again follows from the triangle inequality.\\


We next turn to the proof of \iref{avgMu}, in which the dimension $d\geq 1$ is arbitrary.
The orthogonal group $\cO_d$ is equipped with the canonical Haar probability measure. 
We denote by $R$ a random orthogonal matrix, and by $\cP$ the probability of an event in $\cO_d$.

For each matrix $M\in S_d^+$, we denote by $0 < \nu_1(M) \leq \cdots \leq \nu_d(M)$ the square roots of the successive eigenvalues of $M$. In particular $\nu_1(M) = \|M^{-1}\|^{-\frac 1 2}$, and $\nu_d(M) = \|M\|^\frac 1 2$. We denote by $|u| := \sqrt{u^\trans u}$ the \emph{euclidean} norm of a vector $u\in \R^d$.

\begin{lemma}
\label{lem:u}
Let $M\in S_d^+$ be such that $\det(M)=1$. 
Then for any $u\in \R^d$ such that $|u|=1$ one has 
\be
\label{proba:u}
\cP(\, \|R u\|_M \leq \delta\, ) \leq C(d) \, \delta^{d-1} \, \nu_1(M), 
\ee
where $C(d) = 2^d/ \omega_d$, and $\omega_d$ is the volume of the $d$-dimensional euclidean unit ball.
\end{lemma}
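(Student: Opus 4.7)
The plan is to reduce the probability to a surface-area estimate on $S^{d-1}$ and then to a volume comparison between a cone and a solid built from a $(d-1)$-dimensional ellipsoid. First, since $R$ is Haar-distributed on $\cO_d$ and $|u|=1$, the vector $v:=Ru$ is uniformly distributed on $S^{d-1}$, so $\cP(\|Ru\|_M\le\delta)=\sigma(A)/\sigma(S^{d-1})$, where $A:=\{v\in S^{d-1}:\|v\|_M\le\delta\}$ and $\sigma(S^{d-1})=d\omega_d$. After an orthogonal change of coordinates that diagonalises $M$ (which does not affect the statement, since the Haar law of $R$ is bi-invariant), I may assume $M=\mathrm{diag}(\nu_1^2,\ldots,\nu_d^2)$. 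Splitting $v=(v_1,v')\in\R\times\R^{d-1}$ and setting $M':=\mathrm{diag}(\nu_2^2,\ldots,\nu_d^2)\in S_{d-1}^+$, one has $\|v\|_M^2=\nu_1^2 v_1^2+\|v'\|_{M'}^2$, and the normalisation $\det M=1$ gives $\sqrt{\det M'}=1/\nu_1$.

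The heart of the proof is a volume inclusion. Introducing the cone $C:=\{tv:t\in[0,1],\,v\in A\}\subset\R^d$, polar integration yields $\mathrm{Vol}_d(C)=\sigma(A)/d$. For any $w=tv\in C$ one has $|w|=t\le 1$ and
\[
\|w'\|_{M'}=t\|v'\|_{M'}\le\|v'\|_{M'}\le\|v\|_M\le\delta,
\]
so $C$ is contained in the body $K:=\{(w_1,w')\in\R\times\R^{d-1}:|w|\le 1,\ \|w'\|_{M'}\le\delta\}$. Slicing $K$ by the coordinate $w_1\in[-1,1]$ and bounding each slice trivially by the whole $(d-1)$-ellipsoid $E':=\{w'\in\R^{d-1}:\|w'\|_{M'}\le\delta\}$, Fubini gives
\[
\tfrac{1}{d}\sigma(A)=\mathrm{Vol}_d(C)\le \mathrm{Vol}_d(K)\le 2\,\mathrm{Vol}_{d-1}(E')=\frac{2\omega_{d-1}\delta^{d-1}}{\sqrt{\det M'}}=2\omega_{d-1}\delta^{d-1}\nu_1.
\]

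Dividing by $\sigma(S^{d-1})=d\omega_d$ produces $\cP(\|Ru\|_M\le\delta)\le(2\omega_{d-1}/\omega_d)\delta^{d-1}\nu_1$, and it only remains to verify $2\omega_{d-1}\le 2^d$, equivalently $\omega_{d-1}\le 2^{d-1}$ — a routine estimate (sharp at $d=1,2$, with large slack for $d\ge 3$ since $\omega_{d-1}\to 0$ while $2^{d-1}\to\infty$) — to obtain the announced constant $C(d)=2^d/\omega_d$. I do not expect a real obstacle: the only genuinely substantive choice is to split $v$ along the eigendirection associated with the \emph{smallest} eigenvalue $\nu_1^2$ of $M$, since this is what converts $1/\sqrt{\det M'}$ into $\nu_1$ and yields the anisotropy-aware factor on the right-hand side; slicing along any other eigendirection would give a weaker constant.
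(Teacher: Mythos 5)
Your proof is correct and follows essentially the same route as the paper's: both reduce the probability to the volume of the cone $\{x : |x|\le 1,\ \|x\|_M\le\delta|x|\}$ after diagonalising $M$, and bound that volume by a product set of extent $O(1)$ in the $\nu_1$-eigendirection and $O(\delta/\nu_i)$ in the others, with $\det M=1$ converting $1/(\nu_2\cdots\nu_d)$ into $\nu_1$. The only difference is that you enclose the cross-section in the ellipsoid $E'$ rather than the box $\prod_{i\ge 2}[-\delta/\nu_i,\delta/\nu_i]$, yielding the marginally sharper constant $2\omega_{d-1}/\omega_d\le 2^d/\omega_d$.
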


\begin{proof}
We denote by $B := \{x\in \R^d; \, |x| \leq 1\}$ the euclidean unit ball, and we introduce the set 
$$
B_0 := \{x\in B; \|x\|_M \leq \delta |x|\}.
$$
Since the Haar measure on $\cO_d$ is invariant under the action of rotations, its image by $R \mapsto R u$ is the uniform probability on the euclidean sphere. 
The probability estimated in \iref{proba:u} is therefore equal to $|B_0|/|B|$. 
The invariance of the Haar measure under rotations also allows us to assume without loss of generality that $M$ is a diagonal matrix. For notational simplicity we denote $\nu_i := \nu_i(M)$, for $1\leq i \leq d$, and we observe that  $\nu_1 \cdots \nu_d = \sqrt{\det M} = 1$.

If $x = (x_1, \cdots, x_d)\in B_0$, then 
$$
 \nu_1^2 x_1^2 + \cdots + \nu_d^2 x_d^2 \leq \delta^2 |x|^2 \leq \delta^2,
$$
hence
$$
x\in [-1,1] \times [-\delta/\nu_2, \delta/\nu_2] \times \cdots \times [-\delta/\nu_d, \delta/\nu_d].
$$
It follows that 
$
|B_0| \leq 2^d \delta^{d-1}  /(\nu_2 \cdots \nu_d) = 2^d \delta^{d-1} \nu_1,
$
which concludes the proof.
\end{proof}

Our next result is an estimate in probability of the first Minkowski minimum of $M$ : 
$$
\lambda_1(M) := \min_{u\in \Z^d\sm\{0\}} \|u\|_M.
$$
\begin{corollary}
\label{corol:lambda1}
Let $M \in S_d^+$ be such that $\det(M) = 1$. Then for each $\delta>0$ 
\be
\label{proba:min}
\cP(\,\lambda_1(R^\trans M R) \leq \delta\,) \leq C'(d) \, \delta^d,
\ee
where $C'(d) := 2 d 3 ^{d-1} C(d)$, and $C(d)$ denotes the constant from Lemma \ref{lem:u}.
\end{corollary}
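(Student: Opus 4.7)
The idea is a union bound over lattice vectors, combined with Lemma \ref{lem:u} and a deterministic truncation based on the smallest eigenvalue of $M$. First I would rewrite the minimum intrinsically in terms of $M$ and $R$: since $\|u\|_{R^\trans M R}^2 = (Ru)^\trans M (Ru)$, one has
\[
\lambda_1(R^\trans M R) = \min_{u \in \Z^d \setminus \{0\}} \|Ru\|_M,
\]
so the event $\{\lambda_1(R^\trans M R) \leq \delta\}$ is the union over $u \in \Z^d \setminus \{0\}$ of $\{\|Ru\|_M \leq \delta\}$.

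Next I would truncate the union. Since $\|Ru\|_M \geq \nu_1(M)\,|Ru| = \nu_1(M)\,|u|$ deterministically, the event $\{\|Ru\|_M \leq \delta\}$ is empty unless $|u| \leq \delta/\nu_1(M)$. Thus
\[
\cP(\lambda_1(R^\trans M R) \leq \delta) \leq \sum_{\substack{u \in \Z^d \setminus \{0\} \\ |u| \leq \delta/\nu_1(M)}} \cP(\|Ru\|_M \leq \delta).
\]
For each such $u$, set $r := |u|$ and write $u = r\hat u$ with $|\hat u|=1$. By rotational invariance of the Haar measure, $R\hat u$ has the same distribution as $R e_1$, so applying Lemma \ref{lem:u} with the unit vector $\hat u$ and threshold $\delta/r$ yields
\[
\cP(\|Ru\|_M \leq \delta) = \cP(\|R\hat u\|_M \leq \delta/r) \leq C(d)\,(\delta/r)^{d-1}\,\nu_1(M).
\]

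It remains to sum the bound $C(d)\,\nu_1(M)\,\delta^{d-1}\,|u|^{-(d-1)}$ over lattice points $u$ with $0 < |u| \leq N$, where $N := \delta/\nu_1(M)$ (if $\delta < \nu_1(M)$ then $N<1$, no lattice point contributes and the estimate is trivially $0$). I would organize this sum by sup-norm shells: for each integer $k \geq 1$ the shell $\{u \in \Z^d : |u|_\infty = k\}$ has cardinality $(2k+1)^d - (2k-1)^d \leq 2d\,(2k+1)^{d-1} \leq 2d\cdot 3^{d-1}\,k^{d-1}$, and on this shell one has $|u| \geq |u|_\infty = k$. Therefore
\[
\sum_{\substack{u \in \Z^d \\ 0 < |u| \leq N}} |u|^{-(d-1)} \leq \sum_{k=1}^{\lfloor N \rfloor} 2d\cdot 3^{d-1}\,k^{d-1} \cdot k^{-(d-1)} \leq 2d\cdot 3^{d-1}\,N.
\]
Multiplying by $C(d)\,\nu_1(M)\,\delta^{d-1}$ and substituting $N = \delta/\nu_1(M)$, the factors $\nu_1(M)$ cancel and we obtain the announced bound $C'(d)\,\delta^d$ with $C'(d) = 2d\cdot 3^{d-1}\,C(d)$.

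The only delicate point is ensuring that the union bound is restricted to a finite set, which is why the deterministic inequality $\|Ru\|_M \geq \nu_1(M)|u|$ is used upfront; everything else is routine counting. Note that the hypothesis $\det(M)=1$ is not used directly in this step, but it is needed to invoke Lemma \ref{lem:u}.
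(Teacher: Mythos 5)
Your proposal is correct and follows essentially the same route as the paper: the same union bound truncated to the finite set $\{u\in\Z^d:\,0<|u|\le\delta/\nu_1(M)\}$, the same application of Lemma \ref{lem:u} to the normalized vectors $u/|u|$ with threshold $\delta/|u|$, and the same sup-norm shell count $(2k+1)^d-(2k-1)^d\le 2d\cdot 3^{d-1}k^{d-1}$ yielding the constant $C'(d)=2d\,3^{d-1}C(d)$. Nothing to add.
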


\begin{proof}
We introduce the collection of points $E := \{ u\in \Z^d ;\, 0 < |u| \leq \delta/\nu_1(M)\}$, and we observe that $\|R u\|_M \geq \delta$ for any $u \in \Z^d \sm (E \cup \{0\})$ and any $R \in \cO_d$. 
We obtain
\begin{eqnarray}
\nonumber
\cP(\, \lambda_1(R^\trans M R) \leq \delta\, ) &\leq& \sum_{u\in \Z^d\sm\{0\}} \cP( \, \|R u\|_M \leq \delta\,)\\
\nonumber
&=& \sum_{u\in E} \cP( \, \|R u\|_M \leq \delta\,)\\
\label{PLambdaSum}
& \leq & C(d) \, \nu_1(M) \sum_{u \in E} (\delta/|u|)^{d-1},
\end{eqnarray}
where in the third line we applied Lemma \ref{lem:u} to the normalized vectors $u/|u|$, $u\in E$.

In order to estimate $\sum_{u \in E} (\delta/|u|)^{d-1}$, we introduce the $\|\cdot \|_\infty$ norm on $\R^d$ defined by $\|(u_1, \cdots, u_d)\|_\infty := \max\{|u_1|, \cdots, |u_d|\}$. We observe that $\|u\|_\infty \leq |u|$, and that for each positive integer $k$ there exists exactly $(2k+1)^d-(2k-1)^d$ elements $u\in \Z^d$ which satisfy $\|u\|_\infty = k$.
Therefore 
\be
\label{sumE}
\sum_{u\in E} |u|^{1-d} \leq \sum_{u\in E} \|u\|_\infty^{1-d} = \sum_{0 < k \leq \delta /\nu_1(M)} \frac{(2k+1)^d-(2k-1)^d}{k^{d-1}}
\ee
Remarking that 
$(2k+1)^d-(2k-1)^d \leq d(2k+1)^{d-1} ((2k+1)-(2k-1)) \leq 2 d (3k)^{d-1}$, 
we bound the right hand side of \iref{sumE} by $(\delta /\nu_1(M)) 2 d 3^{d-1}$. Combining this estimate with \iref{PLambdaSum}, we conclude the proof. 
\end{proof}

The right part of Minkowski's second theorem on successive minima \iref{Minkowski} implies
$$
\lambda_d(M) \leq \frac{2^d}{\omega_d} \frac{\sqrt{\det M}}{\lambda_1(M)^{d-1}}.
$$
For any matrix $M\in S_d^+$ such that $\det(M)=1$, we thus obtain
\begin{eqnarray*}
\frac{\omega_d}{2^d}
\int_{\cO_d} \lambda_d(R^\trans M R) dR 
& \leq & \int_{\cO_d} \frac{dR}{\lambda_1(R^\trans M R)^{d-1}} \\
& = & (d-1) \int_{0}^\infty \cP(\, \lambda_1(R^\trans M R)\leq \delta\, ) \frac{d \delta}{\delta^d}\\
& \leq & (d-1) \int_0^1 C'(d) \delta^d \, \frac{d \delta}{\delta^d} + (d-1) \int_1^\infty \frac{d \delta}{\delta^d}\\
&=&(d-1)C'(d)+1. 
\end{eqnarray*}
This concludes the proof of \iref{avgMu} in the case of a matrix $M$ of determinant $1$. 

The general case of an arbitrary determinant can be reduced to the case $\det(M)=1$ by homogeneity. Indeed the determinant is homogeneous : $\det (\mu^2 M) = \mu^{2d} \det (M)$, for any $\mu>0$; and so are Minkowski's minima : $\lambda_i(\mu^2 M) = \mu \,\lambda_i(M)$ (this is an immediate consequence of the homogeneity of the norm : $\|u\|_{\mu^2 M} = \mu \|u\|_M$ for any $u\in \R^d$). 


\section{Numerical experiments, and extension to dimension $4$}
\label{sec:num}
We discuss in the first part of this section some two-dimensional numerical experiments, which illustrate our two main results Theorem \ref{th:estim} and Theorem \ref{th:mu}. These numerical experiments are conducted in the restrictive setting which is the framework of these results: a constant riemannian metric. The reader interested in benchmarks closer to applications is referred to \cite{M12}.
The second part of this section is devoted to the extension of our variant of the fast marching algorithm to four dimensional domains.\\

We conducted some numerical experiments, in which the anisotropic eikonal equation associated to a constant riemannian metric $\cM = M\in S_2^+$ is discretized on the finite grid $\Omega_* := \{-500, \cdots, 500\}^2 \sm \{(0,0)\}$ and solved using three different methods. The matrix $M$ has eigenvalues $1/10$ and $10$, thus anisotropy ratio $\kappa(M) = 10$, and the eigenvector associated to the eigenvalue $1/10$ is $(1, 0.6)$. The fast marching algorithm (FM) is applied with two different $M$-reduced meshes $\cT_1$ and $\cT_2$ presented on Figure \ref{fig:dependency} (top and bottom respectively), of cardinality 6 and 10 respectively.
The mesh $\cT_1$ is constructed as described in Proposition 1.9 in \cite{M12}. The mesh $\cT_2$ is obtained using a different strategy,  which will be the object of a future article : an iterative subdivision procedure which stops when the mesh satisfies the acuteness condition \iref{acuteness}. 
The results are compared with with the Iterative Iterative Gauss Siedel (IGS) algorithm introduced in \cite{BR06}.
$$
\begin{array}{c|c|c|c|} 
 & \text{IGS} & \text{FM }\cT_1 & \text{FM }\cT_2 \\ 
 \hline
\|\distC-\dist\|_{L^\infty(\Omega_*)} & 25.2 & 2.69 & 0.803 \\ 
\|\distC-\dist\|_{L^\infty(\Omega^1_*)} & 25.2 & 1.25 & 0.803 \\ 
\text{Timing} & 2.0s & 3.6s & 13.1s \\ 
\end{array}
$$
The computation times were obtained on a 2.4Ghz core 2 duo laptop. Surprisingly, the IGS is the fastest algorithm on this test case, but it is also the most imprecise.  Indeed the numerical error (on the first line of the table) is reduced by a factor $9$ when one replaces the IGS with the fast marching algorithm with the mesh $\cT_1$, and by an additional factor $3$ with the mesh $\cT_2$. 
For comparison, the original \emph{isotropic} fast marching algorithm, with $M' = \Id$ and on the same domain, yields a numerical error of $2.1$, which confirms that our variant of the fast marching algorithm did not suffer from the anisotropy of $M$, contrary to the IGS.

\begin{figure}
\begin{center}
\includegraphics[width=5cm]{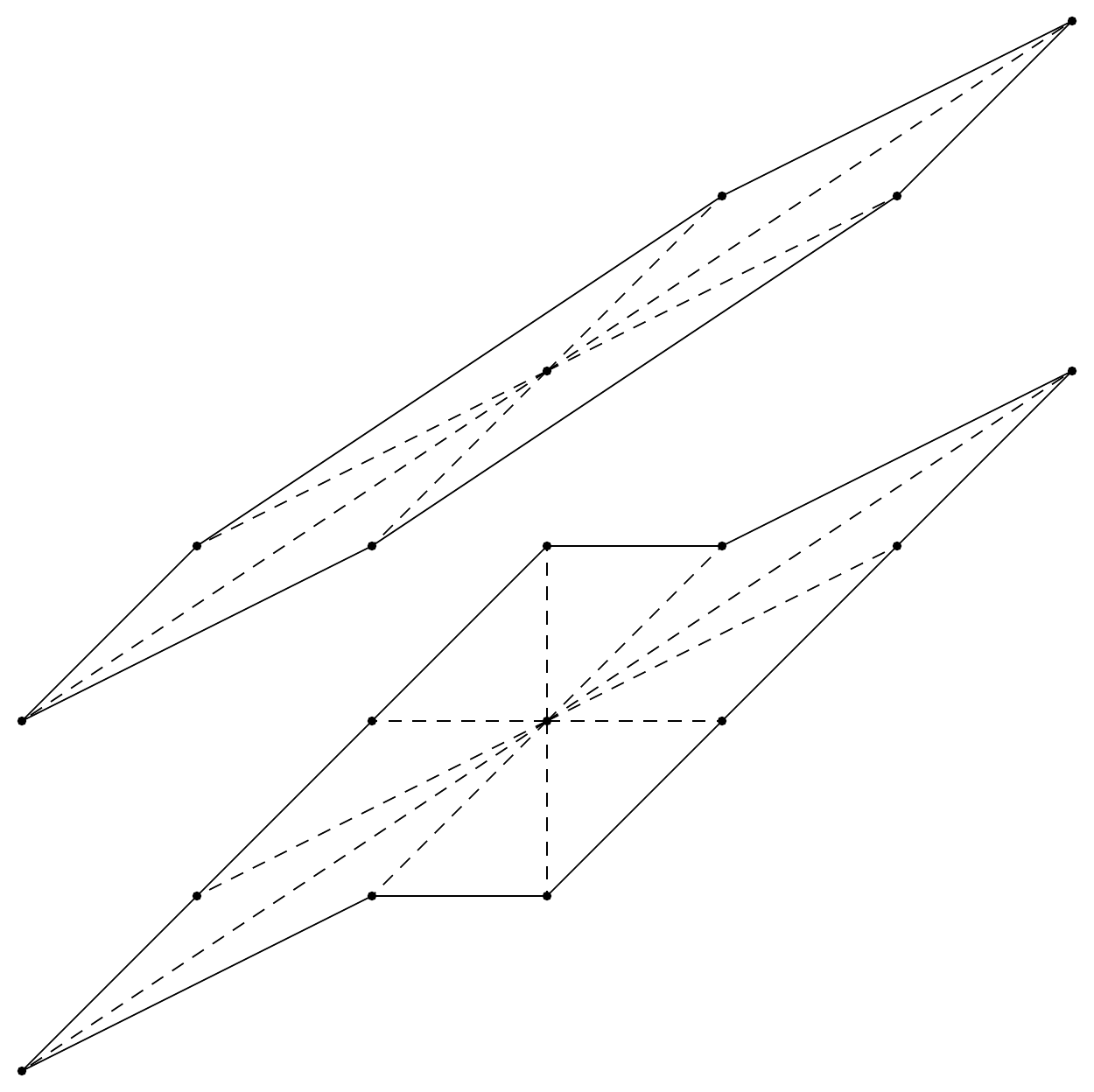}
\includegraphics[width=5cm]{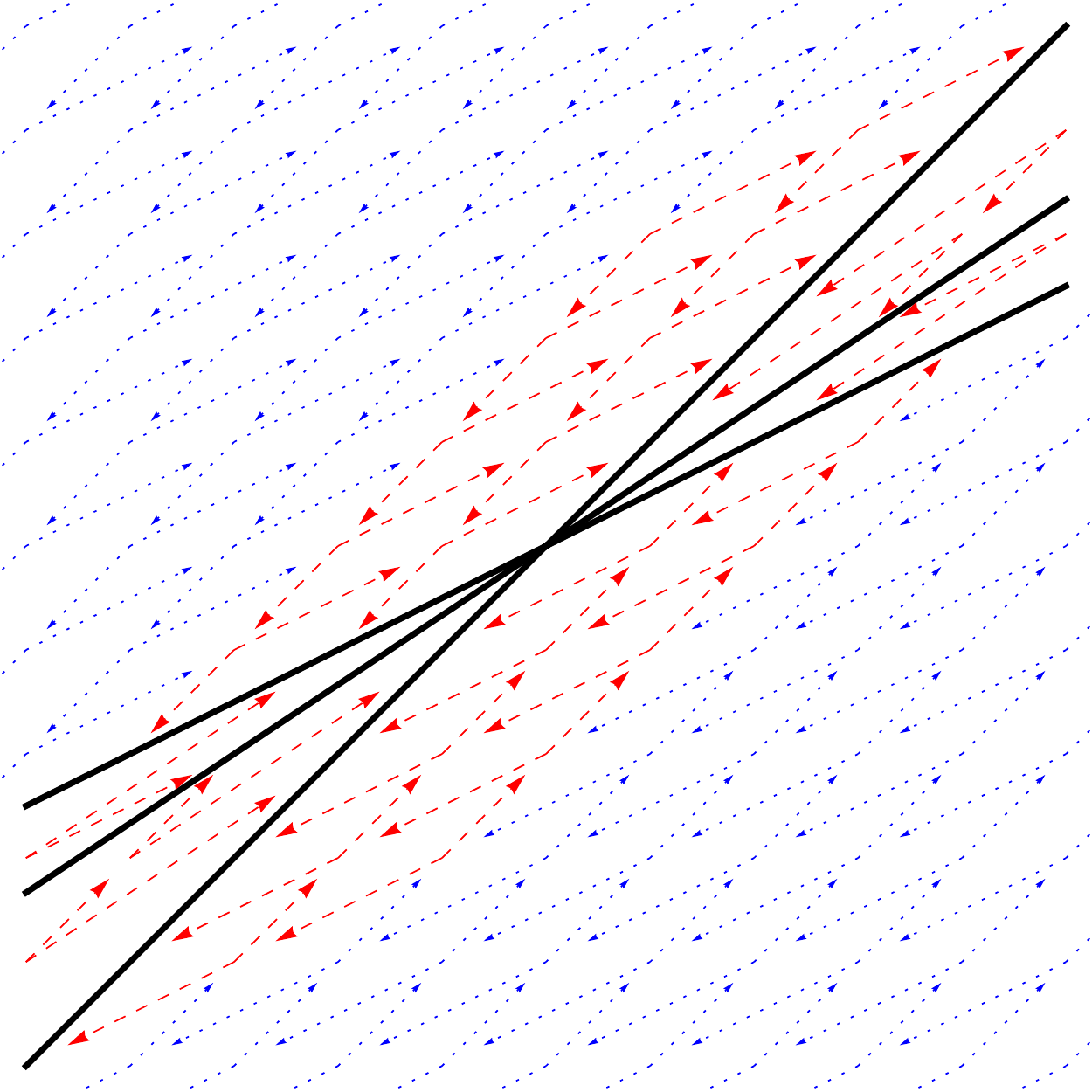}
\includegraphics[width=5cm]{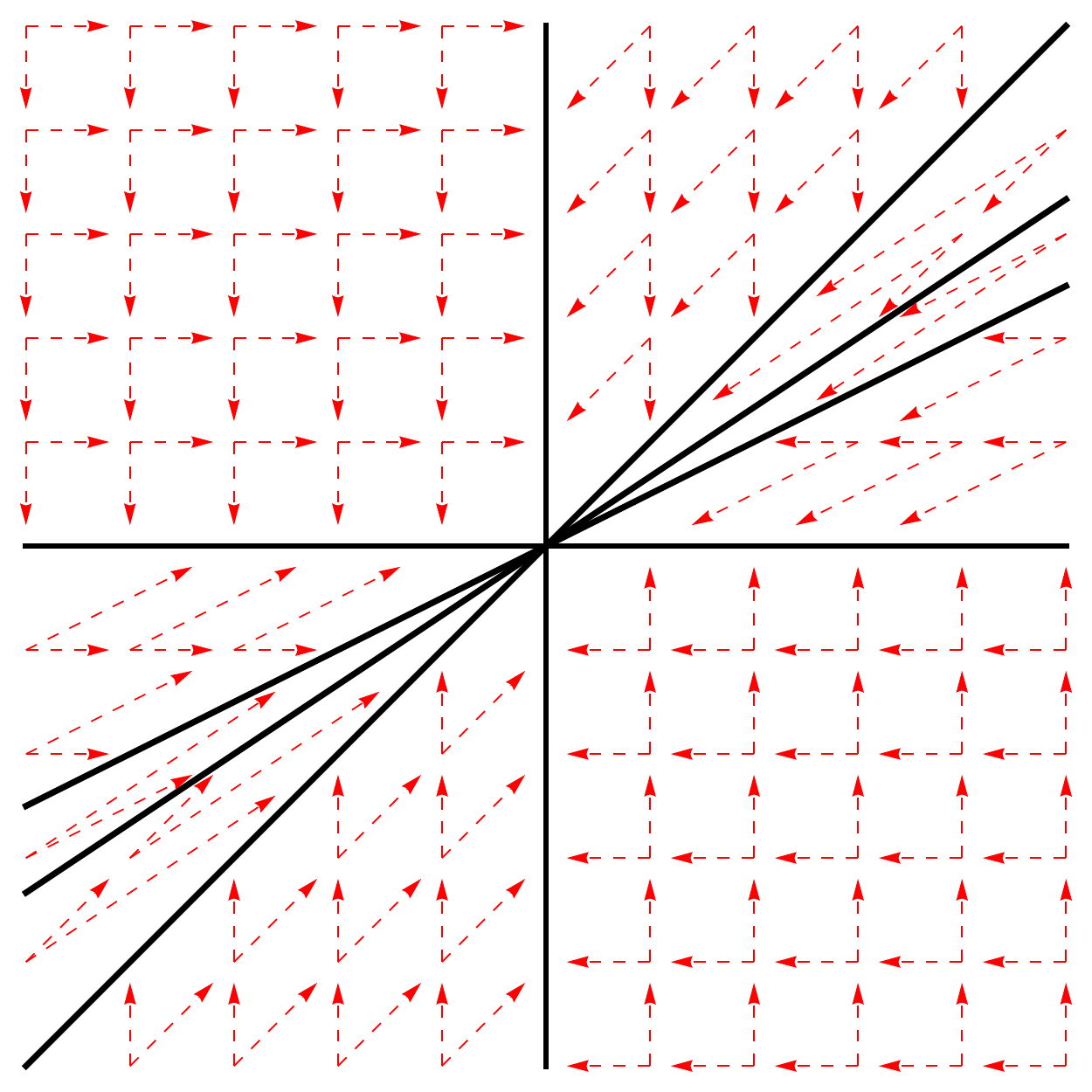}
\end{center}
\label{fig:dependency}
\caption{The two meshes $\cT_1$ (top) and $\cT_2$ (bottom). Portion of the domain where our error analysis of the fast marching algorithm applies (thick, dashed, red arrows) for $\cT_1$ (center) and $\cT_2$ (right).} 
\end{figure}

The numerical error is defined as the maximum absolute value of the difference $\distC-\dist$ between the exact solution of the eikonal equation $\distC(z) = \|z\|_M$, and a discrete approximation $\dist$ produced by the algorithm of interest. The maximum is taken either on the whole finite grid $\Omega_*$ for the first line of the table, or for the second line on a subset $\Omega_*^1$ which is described in the next remark.

\begin{remark}
The error analysis presented in Theorem \ref{th:estim} was obtained on the infinite grid $\Z^2\sm \{(0,0)\}$, instead of the finite grid $\Omega_* =\{-500,\cdots,500\}^2\sm \{(0,0)\}$, and is not automatically valid on the whole $\Omega_*$ for the following reason. Consider two non-zero vertices $u,v$ of a triangle $T$ in the $M$-reduced mesh $\cT$, and $\alpha, \beta \in \Z_+$ such that $z:=-(\alpha u + \beta v)$ belongs to the discrete domain. We used in Proposition \ref{prop:infinite_needed} that $-(\alpha' u+\beta' v)$ also belongs to the discrete domain, for all $0 \leq \alpha' \leq \alpha$ and all $0 \leq \beta' \leq \beta$. 

This property does not hold for the mesh $\cT_1$ and the finite grid $\Omega_*$, but does hold for the mesh $\cT_2$ on the same grid, see Figure \ref{fig:dependency} (center and right respectively). More precisely, for the mesh $\cT_1$, this property holds on a subset $\Omega_*^1$ which contains approximately $37\%$ of the elements of $\Omega_*$, and is illustrated on Figure \ref{fig:dependency} (center; thick, dashed, red arrows). As expected, and illustrated on the second line of the table, the numerical error of the fast marching algorithm with the $M$-reduced mesh $\cT_1$ is significantly reduced on this subset, while it is unchanged with the mesh $\cT_2$ or the IGS.\\
\end{remark}

This rest of this section is devoted to the proof of the following proposition, which extends the variant of the fast marching algorithm presented in \cite{M12} to four dimensional domains. 

A $M$-reduced basis, where $M\in S_d^+$, $1 \leq d \leq 4$, is a collection $(u_1, \cdots, u_d)$ of elements of $\Z^d$ satisfying $|\det(u_1, \cdots, u_d)| = 1$ and $\|u_i\|_M = \lambda_i(M)$. Such a basis exists and can be obtained with an algorithm of complexity $\cO(\ln \kappa(M))$, if one regards the elementary operations among reals $(+,-,\times,/)$ as of unit complexity, see \cite{NS09,M12}.

\begin{prop}
\label{prop4}
Let $M\in S_4^+$ and let $(u_1, u_2, u_3, u_4)$ be a $M$-reduced basis. Consider the collection $\cT$ of simplices which have the following vertices
\begin{eqnarray}
\label{simpl1}
&(0,\, v_1, \ v_1+v_2, \ v_1+v_2+v_3, \ 2v_1+v_2+v_3+ v_4), &\\
\label{simpl2}
&(0, \, v_1+v_2, \ v_1+v_2+v_3, \ v_1+v_2+v_3+v_4, \ 2v_1+v_2+v_3+ v_4),&
\end{eqnarray}
where $(v_1,v_2,v_3,v_4)$ runs over all permutations of $(\ve_1 u_1, \, \ve_2 u_2, \, \ve_3 u_3, \, \ve_4 u_4)$, with arbitrary signs $(\ve_1, \, \ve_2,\, \ve_3,\, \ve_4)\in \{-1,1\}^4$. Then $\cT$ is a $M$-reduced mesh, of cardinality $768$.
\end{prop}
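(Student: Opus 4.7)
The plan is to verify the three defining properties (I), (II), (III) of an $M$-reduced mesh, together with conformity.

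\textbf{Cardinality and property (II).} The count $4!\cdot 2^4 \cdot 2 = 768$ is immediate. Each vertex in \eqref{simpl1}--\eqref{simpl2} is an integer combination of the $u_i\in \Z^4$, hence lies in $\Z^4$. To see that each simplex has volume $1/4! = 1/24$, I would compute the determinant of the four edge vectors emanating from $0$. For \eqref{simpl1} these are $v_1$, $v_1+v_2$, $v_1+v_2+v_3$, $2v_1+v_2+v_3+v_4$; successive column operations $C_2\leftarrow C_2-C_1$, $C_3\leftarrow C_3-C_2$, $C_4\leftarrow C_4-C_3-C_1$ reduce the determinant to $\det(v_1,v_2,v_3,v_4) = \pm\det(u_1,u_2,u_3,u_4) = \pm 1$. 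The analogous elimination handles simplex \eqref{simpl2}, so property (II) holds.

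\textbf{Covering and conformity (property (I)).} For each fixed sign vector $\ve\in\{\pm 1\}^4$, the $48$ simplices obtained from the $24$ permutations and the two templates \eqref{simpl1}--\eqref{simpl2} should assemble into a polytope $P_\ve$. First, for a fixed permutation, \eqref{simpl1} and \eqref{simpl2} share the $3$-face with vertex set $\{0, v_1+v_2, v_1+v_2+v_3, 2v_1+v_2+v_3+v_4\}$ and together form a bipyramid on $6$ vertices. Ranging over permutations, these bipyramids fit together as in the standard Coxeter--Freudenthal--Kuhn construction, twisted by the extra vertex $2v_1+v_2+v_3+v_4$: bipyramids coming from permutations differing by a transposition share a common $3$-face, and conformity inside $P_\ve$ then follows from the permutohedron combinatorics. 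Finally the polytopes $P_\ve$ meet along the coordinate hyperplanes of the basis $(u_1,\ldots,u_4)$, and their union is a star-shaped neighborhood of the origin.

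\textbf{Acuteness (property (III)), the main obstacle.} One must verify $\<p,q\>_M \geq 0$ for every pair $p,q$ of non-zero vertices of any simplex in $\cT$. It suffices to handle a finite list of vertex pairs formed from the five patterns $v_1$, $v_1+v_2$, $v_1+v_2+v_3$, $v_1+v_2+v_3+v_4$, $2v_1+v_2+v_3+v_4$, where each $v_i = \pm u_{\sigma(i)}$. Each candidate inner product expands as a bilinear combination of the diagonal terms $\|u_i\|_M^2 = \lambda_i(M)^2$ and the cross-terms $\<u_i,u_j\>_M$, which are controlled by the reduction inequalities satisfied by a Minkowski-reduced basis together with the monotonicity $\lambda_1(M)\leq\cdots\leq\lambda_4(M)$. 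The case analysis is extensive, and I expect it to be the main obstacle: the key quantitative point is that the coefficient $2$ in front of $v_1$ in $2v_1+v_2+v_3+v_4$ provides exactly the slack needed to absorb the three potentially negative cross-terms $\<u_1,u_j\>_M$ for $j=2,3,4$. This absorbing role of the extra $v_1$ is precisely what is missing in dimensions $2$ and $3$, where a Minkowski-reduced basis (after the modifications in Propositions 1.9 and 1.10 of \cite{M12}) is automatically acute and the simpler templates suffice.
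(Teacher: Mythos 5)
Your outline follows the same skeleton as the paper's proof: count the simplices, check integrality and unit covolume via the determinant identities (your column operations are equivalent to the paper's direct computation), establish conformity and the covering property, and then verify acuteness. The routine parts are fine, and your bipyramid/permutohedron sketch for conformity is a legitimate elaboration of what the paper dispatches more cheaply, by observing that the whole configuration is the image under a unimodular change of variables of the corresponding mesh for the canonical basis, for which the covering and conformity are checked once and for all.

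The genuine gap is in the acuteness verification, which is the actual content of the proposition and which you defer as an ``extensive case analysis'' controlled by ``the reduction inequalities'' on the pairwise cross-terms $\<u_i,u_j\>_M$ and the monotonicity of the $\lambda_i(M)$. That is not the tool the paper uses, and pairwise bounds alone do not suffice. The key lemma is Proposition 1.6 of \cite{M12} (recalled as \iref{scalNormIneq}): for an $M$-reduced basis, $2\,|\<z,u_i\>_M| \leq \|z\|_M^2$ for \emph{every integer combination} $z$ of the basis vectors other than $u_i$, not merely for $z$ a single basis vector. With this, each required scalar product splits into one or two brackets of the form $\|z\|_M^2+\sum_j\<z,v_j\>_M$ (with $z=v_1$, $v_1+v_2$, $v_1+v_2+v_3$ or $v_2+v_3+v_4$), each nonnegative by at most two or three applications of the inequality. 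If instead one only bounds $|\<v_i,v_j\>_M|$ pairwise, the terms such as $\<v_1+v_2,v_3\>_M+\<v_1+v_2,v_4\>_M$ in $\<v_1+v_2,\,2v_1+v_2+v_3+v_4\>_M$, or $2\<v_2+v_3+v_4,v_1\>_M$ in $\<2v_1+v_2+v_3+v_4,\,v_1+v_2+v_3+v_4\>_M$, cannot be absorbed: the compensating quantities are $\|v_1+v_2\|_M^2$ and $\|v_2+v_3+v_4\|_M^2$, which may be much smaller than the sums of squared norms that pairwise estimates produce, and a direct computation along your lines leaves a deficit of order $\min(\|v_1\|_M^2,\|v_2\|_M^2)$. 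Your heuristic that the coefficient $2$ on $v_1$ supplies the needed slack is correct in spirit, but only the ``sum'' version of the reduction inequality turns it into a proof.
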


The cardinality of $\#(\cT)$ is indeed $2\times 4! \times 2^4=768$, where the factor $2$ stands for the two types of simplices, $4!$ is the number of permutations of a four elements set, and $2^4$ is the number of choices for the signs $(\ve_1, \, \ve_2,\, \ve_3,\, \ve_4)$. 
The fact that $\cT$ is a conforming mesh, and that the union of its elements is a neighborhood of the origin, is easily checked when $(u_1, u_2, u_3,u_4)$ is the canonical basis of $\R^d$. It thus holds for an arbitrary basis $(u_1, u_2, u_3,u_4)$ of $\R^d$ by change of variables. Observing that 
\begin{eqnarray*}
|\det(v_1, \ v_1+v_2, \ v_1+v_2+v_3, \ 2v_1+v_2+v_3+ v_4)| &=& |\det(u_1, \, u_2, \, u_3, \, u_4)|,\\
|\det (v_1+v_2, \ v_1+v_2+v_3, \ v_1+v_2+v_3+v_4, \ 2v_1+v_2+v_3+ v_4)| &=& |\det(u_1, \, u_2, \, u_3, \, u_4)|,
\end{eqnarray*}
and recalling that $|\det(u_1, u_2, u_3,  u_4)| = 1$, by definition, we obtain that the volume of each element of $\cT$ is $1/4!$. In order to conclude the proof, it only remains to establish the acuteness property \iref{acuteness}.



We now recall a property of the $M$-reduced basis $(u_1, u_2, u_3, u_4)$, established in Proposition 1.6 of \cite{M12}.
For any integer combination $z$ of the elements of the basis distinct from $u_i$, in other words $z=\alpha_1 u_1+ \cdots +\alpha_{i-1} u_{i-1}+ \alpha_{i+1} u_{i+1}+ \cdots+\alpha_4 u_4$, where $\alpha_1, \cdots, \alpha_{i-1}, \alpha_{i+1}, \cdots, \alpha_4\in \Z$, one has 
\be
\label{scalNormIneq}
2 |\<z,u_i\>_M| \leq \|z\|_M^2.
\ee

The following lines establish that the angle $\<u,v\>_M$ formed by any pair of non-zero vertices $u,v$ of any simplex $T \in \cT$ is non-negative. For that purpose we distribute by bi-linearity the terms of the scalar product $\<u,v\>_M$, and we group them within square brackets. It easily follows from \iref{scalNormIneq} that the sum of the contents of each square bracket is non-negative. 
In the following three cases, this inequality should be applied with $z=v_1$.
\begin{eqnarray*}
\<v_1,\, v_1+v_2\>_M & = & [\,  \|v_1\|_M^2 + \<v_1, v_2\>_M \, ].\\
\<v_1,\, v_1+v_2+v_3\>_M & = & [\,  \|v_1\|_M^2 + \<v_1,v_2\>_M + \<v_1, v_3\>_M\, ].\\
\<v_1,\, 2 v_1+v_2+v_3+v_4\>_M & = & [\,  2\|v_1\|_M^2 + \<v_1, v_2\>_M + \<v_1, v_3\>_M+\<v_1,v_4\>_M\, ].
\end{eqnarray*}
In the next three cases, one should apply \iref{scalNormIneq} with $z=v_1$, $z=v_1+v_2$ or $z=v_1+v_2+v_3$.
\begin{eqnarray*}
\<v_1+v_2,\, v_1+v_2+v_3\>_M &=& [ \, \|v_1+v_2\|_M^2+\<v_1+v_2,v_3\>_M \,].\\
\<v_1+v_2,\, 2v_1+v_2+v_3+v_4\>_M &=& [ \, \|v_1+v_2\|_M^2+\<v_1+v_2,v_3\>_M + \<v_1+v_2,v_4\>_M \,]\\
& & + [\, \|v_1\|_M^2+\<v_1,v_2\>_M\, ].\\
\<v_1+v_2+v_3,\, 2v_1+v_2+v_3+v_4\>_M &=& [ \, \|v_1+v_2+v_3\|_M^2+\<v_1+v_2+v_3,v_4\>_M \,]\\
& & + [\, \|v_1\|_M^2+\<v_1,v_2\>_M+\<v_1,v_3\>_M\, ].
\end{eqnarray*}
The non-negativity of the above expressions shows as announced that any two non-zero vertices $u,v$ of any simplex of the form \iref{simpl1} form a non-negative scalar product. We obtain a similar conclusion for simplices of the form \iref{simpl2} by checking that the following scalar products are non-negative. One should apply \iref{scalNormIneq} with $z=v_1$, $z=v_1+v_2$, $z=v_1+v_2+v_3$ or $z=v_2+v_3+v_4$.
\begin{eqnarray*}
\<v_1+v_2,\, v_1+v_2+v_3+v_4\>_M &=& [ \, \|v_1+v_2\|_M^2+\<v_1+v_2,v_3\>_M + \<v_1+v_2,v_4\>_M \,].\\
\<v_1+v_2+v_3,\, v_1+v_2+v_3+v_4\>_M &=& [ \, \|v_1+v_2+v_3\|_M^2+\<v_1+v_2+v_3,v_4\>_M \,].\\
\<2v_1+v_2+v_3+v_4,\, v_1+v_2+v_3+v_4\>_M &=& [ \, \|v_2+v_3+v_4\|_M^2 + 2\<v_2+v_3+v_4,v_1\>\, ] \\
& & + [ \, 2 \|v_1\|_M^2 + \<v_1, v_2\>_M + \<v_1, v_3\>_M + \<v_1, v_4\>_M \, ].
\end{eqnarray*}

\section*{Conclusion}

We have continued in this paper the analysis of a variant of the fast marching algorithm, introduced in \cite{M12}, which is particularly efficient in the context of large anisotropies. 
The computational complexity of this method was known to be largely insensitive to anisotropy.  
This paper establishes, in the special case of a constant metric on the domain $\R^d\sm \{0\}$, $1 \leq d \leq 4$, and in an average sense over all anisotropy orientations, that the numerical accuracy not affected either by anisotropy, however pronounced. A two-dimensional numerical experiment confirms this excellent accuracy, and an extension of the algorithm to four dimensional domains is presented.
Future work will be devoted to the error analysis in the case of a general continuous metric, and to the application of this algorithm to medical image and data analysis.

\paragraph{Acknowledgement : } The author thanks Gabriel Peyr\'e for a constructive discussion, at MIA 2012 conference, which was the starting point of this work.


\begin{thebibliography}{99}

\bibitem{BC10} F. Benmansour, L. D. Cohen, {\it Tubular Structure Segmentation Based on Minimal Path Method and Anisotropic Enhancement}, International Journal of Computer Vision, 92(2), 192-210, 2010.



\bibitem{BR06} F. Bornemann, C. Rasch, {\it Finite-element Discretization of Static Hamilton-Jacobi Equations based on a Local Variational Principle}, Computing and Visualization in Science, 9(2), 57-69, 2006.

\bibitem{JBTDPIB08}  S. Jbabdi, P. Bellec, R. Toro, J. Daunizeau, M. P{\'e}l{\'e}grini-Issac, H. Benali, {\it Accurate Anisotropic Fast Marching for Diffusion-Based Geodesic Tractography}, International Journal of Biomedical Imaging, 2008.

\bibitem{L82} P.L. Lions, {\it Generalized solutions of Hamilton-Jacobi equations}, Pitman, Boston, 1982.

\bibitem{M1896}
H. Minkowski, {\it Geometrie der Zahlen}, Teubner, Leipzig-Berlin, 1896, Reprinted: Johnson, New York, 1968.


\bibitem{M12} J.-M. Mirebeau, {\it Anisotropic Fast Marching on Cartesian Grids, using Lattice Basis Reduction}, preprint, 2012.

\bibitem{NS09}  P. Q. Nguyen, and D. Stehl{\'e}, {\it Low-dimensional lattice basis reduction revisited}, ACM Transactions on Algorithms, Article 46, 2009.

\bibitem{SV03} J. A. Sethian, A. Vladimirsky, {\it Ordered Upwind Methods for Static Hamilton-Jacobi Equations : Theory and Algorithms}, SIAM Journal of Numerical Analysis, 41(1), 325-363, 2003

\bibitem{S99} J. A. Sethian, {\it Level Set Methods and Fast Marching Methods},
J.A. Sethian, Cambridge University Press, 1999.


\bibitem{T95} J. N. Tsitsiklis, {\it Efficient algorithms for globally optimal trajectories}, IEEE Transactions on Automatic Control, 40(9), 1528-1538, 1995.


\end{thebibliography}
\end{document}